\newtheorem{theorem}{Theorem}[section]
\newtheorem{lemma}[theorem]{Lemma}
\theoremstyle{definition}
\newtheorem{definition}[theorem]{Definition}
\newtheorem{proposition}[theorem]{Proposition}
\newtheorem{corollary}[theorem]{Corollary}
\theoremstyle{remark}
\newtheorem{remark}[theorem]{Remark}
\numberwithin{equation}{section}
\begin{document}
\title{\sc Non-uniqueness of the homotopy class of bounded curvature paths}
\author{Jos\'{e} Ayala}
\address{Instituto de Ciencias Exactas y Naturales ICEN, Universidad Arturo Prat, Iquique, Chile}
\email{jayalhoff@gmail.com}
\author{Hyam Rubinstein}
\address{Department of Mathematics and Statistics, University of Melbourne
              Parkville, VIC 3010 Australia}
\email{rubin@ms.unimelb.edu.au}
\keywords{Bounded curvature paths, homotopy classes, Dubins paths}
\maketitle

\begin{abstract}  A bounded curvature path is a continuously differentiable piecewise $C^2$ path with a bounded absolute curvature that connects two points in the tangent bundle of a surface. In this work, we analyze the homotopy classes of bounded curvature paths for points in the tangent bundle of the Euclidean plane. We show the existence of connected components of bounded curvature paths that do not correspond to those under (regular) homotopies obtaining the first results in the theory outside optimality.  An application to robotics is presented.
\end{abstract}

\section{Prelude}







Given a class of curves satisfying some constraints, understanding when there is a deformation connecting two curves in the class, where all the intermediate curves also are in the class, strongly relies on the defining conditions. When considering continuity any two plane curves (both closed or with different endpoints) are homotopic one into the other. Whitney in 1937 observed that under regular homotopies (homotopy through immersions) not always two planar closed curves lie in the same connected component \cite{whitney}. In fact, there are as many regular homotopy classes of plane curves as integers. When considering curves with different endpoints the concept of homotopies through immersions do not lead to results different from those obtained when only continuity is considered. Dubins in 1957 introduced the concept of bounded curvature path when characterizing bounded curvature paths of minimal length \cite{dubins 1}\footnote{Dubins proved that the length minimiser bounded curvature paths are paths being a concatenation of arcs of circles {\sc c} and a line segment {\sc s}. The {\sc csc}-{\sc ccc} paths.}. 

Let $(x,X),(y,Y)\in T{\mathbb R}^2$ be elements in the tangent bundle of the Euclidean plane. A planar bounded curvature path is a $C^1$ and piecewise $C^2$ path starting at $x$, finishing at $y$; with tangent vectors at these points $X$ and $Y$ respectively and having absolute curvature bounded by $\kappa=\frac{1}{r}>0$. Here $r$ is the minimum allowed radius of curvature. The piecewise $C^2$ property comes naturally due to the nature of the length minimisers \cite{dubins 1}.

A substantial part of the complexity of the theory of bounded curvature paths is described in the following observation. In general, length minimisers are used to establish a distance function between points in a manifold. This approach may not be considered for spaces of bounded curvature paths since in many cases the length variation between length minimisers of arbitrarily close endpoints or directions is discontinuous. Closely related is the fact that for most cases the length minimisers from $(x,X)$ to $(y,Y)$ and from $(y,Y)$ to $(x,X)$ have different length contradicting the symmetry property metrics satisfy. In addition, global length minimisers may not be unique. 

Spaces of bounded curvature paths may have up to six local minima (see Fig. \ref{exdubnonuniq}). In addition, some length minimisers in these paths spaces are not embedded. We first prove that a class of spaces of bounded curvature paths have isolated points being these of three types, see Theorem \ref{c}. Our main result, Theorem \ref{mainresultp1}, establishes the existence of additional homotopy classes of bounded curvature paths that do not correspond to those under (regular) homotopies. For a class of initial and final points $(x,X),(y,Y)\in T{\mathbb R}^2$ we prove the existence of a bounded plane region in which no embedded bounded curvature path defined on it is homotopic (without violating the curvature bound) to a path having a point in his  image not defined in such region. In other words, embedded bounded curvature paths get trapped in these plane regions.
 
 
We prove that embedded bounded curvature paths in these trapping regions are not homotopic while preserving the curvature bound to paths with self intersections, see Corollary \ref{cannotsing}. In order to prove Theorem \ref{mainresultp1} a core result called the S-Lemma is proven (see Lemma \ref{sthm}). This results relates the bound on curvature together with the turning map and its extremals. It is reasonable to conjecture that the space of embedded bounded curvature paths lying in these trapping regions forms an isotopy class. Here we present the first results on bounded curvature paths outside optimality. 



\begin{figure}[h]
	\centering
	\includegraphics[width=.8\textwidth,angle=0]{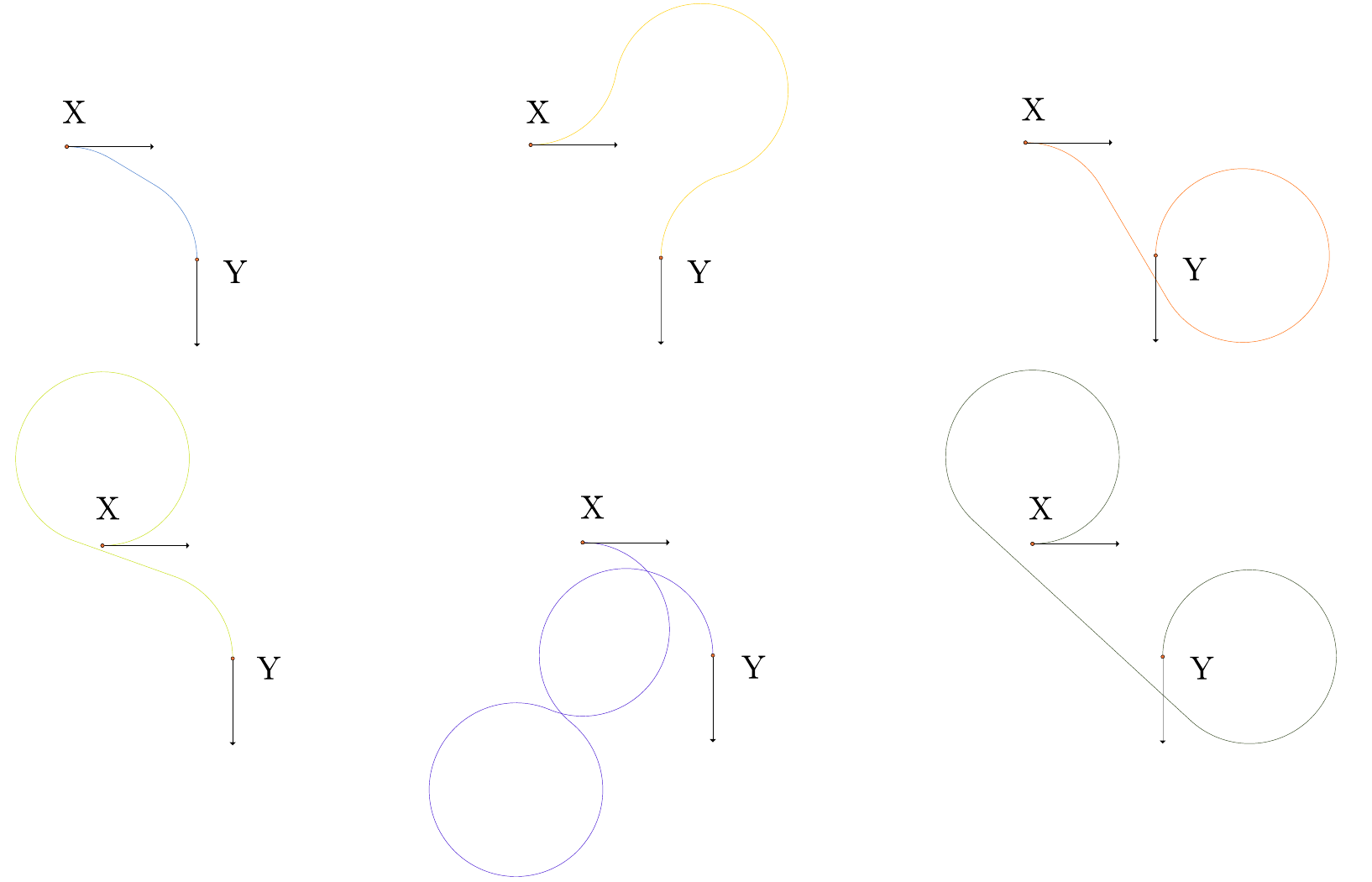}
\caption{Suppose $\mbox{\sc x}=(x,X),\mbox{\sc y}=(y,Y)\in T\mathbb R^2$ with $x=(0,0)$, $X=e^{2\pi i}\in T_x\mathbb R^2$, $y=(1.5,-1.3)$ and $Y=e^{-\frac{\pi}{2} i}\in T_y\mathbb R^2$. First note that all the paths shown are local minima of length (widely known as Dubins paths). Are these six paths in the same connected component? How many connected components are in the space of bounded curvature paths from $\mbox{\sc x}$ to $\mbox{\sc y}$? In Theorem \ref{mainresultp1} we establish that the first path is not homotopic (while preserving the curvature bound throughout the deformation) to any of the other five paths.}
	\label{exdubnonuniq}
\end{figure}

Bounded curvature paths have proven to be extremely useful in applications since a bound on the curvature is a turning circle constraint for the trajectory of wheeled robots (and Unmanned Aerial Vehicles \cite{lin1, owen1,tso1}) along paths. Applications can be found in computer science \cite{aga, bui, irina1, kirk, ny, rus}, control theory \cite{sig1, jur, mit, mon, mur, san, sig2, sus2} and engineering \cite{dot, brazil 1, chang, lin1, owen1,tso1}.

In the design of an underground mine (a network navigated by wheeled robots) the mine is considered as a 3-dimensional system of tunnels where the (directed) nodes correspond to: the surface portal, access points, and draw points. The links correspond to the centerlines of ramps and drives \cite{dot}. In addition, conditions of navigability as turning radius for vehicles, and gradient for ramps are required. Corollary \ref{noarblength} proves the optimality of the algorithm implemented in DOT, a software for constructing minimal length networks of systems of tunnels under the constraints previously described (see \cite{dot} and Fig. \ref{figmine}).

\section{Preliminaries}
Let us denote by $T{\mathbb R}^2$ the tangent bundle of ${\mathbb R}^2$. The elements in $T{\mathbb R}^2$ correspond to pairs $(x,X)$ sometimes denoted just by {\sc x}. As usual, the first coordinate corresponds to a point in ${\mathbb R}^2$ and the second to a tangent vector to ${\mathbb R}^2$ at $x$.

\begin{definition} \label{adm_pat} Given $(x,X),(y,Y) \in T{\mathbb R}^2$, we say that a path $\gamma: [0,s]\rightarrow {\mathbb R}^2$ connecting these points is a {\it bounded curvature path} if:
\end{definition}
 \begin{itemize}
\item $\gamma$ is $C^1$ and piecewise $C^2$.
\item $\gamma$ is parametrized by arc length (i.e $||\gamma'(t)||=1$ for all $t\in [0,s]$).
\item $\gamma(0)=x$,  $\gamma'(0)=X$;  $\gamma(s)=y$,  $\gamma'(s)=Y.$
\item $||\gamma''(t)||\leq \kappa$, for all $t\in [0,s]=:I$ when defined, $\kappa>0$ a constant.
\end{itemize}
Of course, $s$ is the arc-length of $\gamma$.

 The first condition means that a  bounded curvature path has continuous first derivative and piecewise continuous second derivative. We would like to point out that the minimal length elements in spaces of paths satisfying the last three items in Definition \ref{adm_pat} are in fact paths being $C^1$ and piecewise $C^2$. 
  
  For the third condition, without loss of generality, we can extend the domain of $\gamma$ to $(-\epsilon,s+\epsilon)$ for $\epsilon$ arbitrarily small. Sometimes we describe the third item as the endpoint condition. 
  
  The last condition means that bounded curvature paths have absolute curvature bounded above by a positive constant. Without loss of generality we consider $\kappa=1$ throughout this work, see Fig. \ref{figadj}. 
  
We consider the origin of our coordinate system as the base point $x$ with the standard basis $\{X=e_1,e_2\}$. Recall that $T{\mathbb R}^2$ is equipped with a natural projection $\pi : T{\mathbb R}^2 \rightarrow {\mathbb R}^2$. The fiber $\pi^{-1}(x)$ is ${\mathbb S}^1$ for all $x \in{\mathbb R}^2$. The space of endpoint conditions corresponds to a sphere bundle on ${\mathbb R}^2$ with fiber ${\mathbb S}^1$.

\begin{definition} \label{admsp} Given $\mbox{\sc x,y}\in T{\mathbb R}^2$ and a maximum curvature $\kappa>0$. The space of bounded curvature paths satisfying the given endpoint condition is denoted by $\Gamma(\mbox{\sc x,y})$. \end{definition}

 Throughout this work we consider $\Gamma(\mbox{\sc x,y})$ with the topology induced by the $C^1$ metric. It is important to note that properties (among many others) such as number of connected components and number of local minima in $\Gamma(\mbox{\sc x,y})$ depend on the chosen elements in $T{\mathbb R}^2$. 
 
\begin{definition}Let $\mbox{\sc C}_ l(\mbox{\sc x})$ be the unit circle tangent to $x$ and to the left of $X$. Analogous interpretations apply for $\mbox{\sc C}_ r(\mbox{\sc x})$, $\mbox{\sc C}_ l(\mbox{\sc y})$ and $\mbox{\sc C}_ r(\mbox{\sc y})$ (see Fig. \ref{figadj} left). These circles are called {\it adjacent circles}. Denote their centers with lowercase letters, so the center of $\mbox{\sc C}_ l(\mbox{\sc x})$ is denoted by $c_l(\mbox{\sc x})$. 
\end{definition}

\begin{figure}[h]
	\centering
	\includegraphics[width=1\textwidth,angle=0]{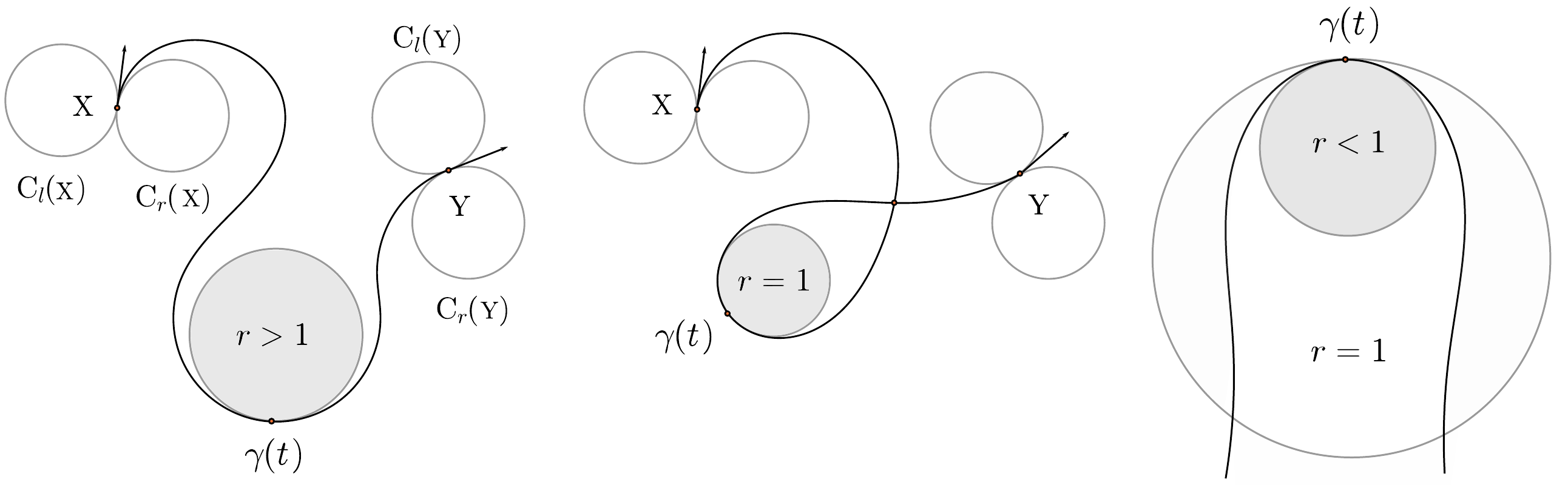}
\caption{Checking the bound on curvature at $\gamma(t)$. The first two paths have curvature bounded by $\kappa=1$. The third path has curvature greater to 1 at $\gamma(t)$. The bound on curvature is violated since $r<1$ at $\gamma(t)$ implies $\kappa>1$. }
	\label{figadj}
\end{figure}

We adopt the following convention: When a path is continuously deformed under parameter $p$, we reparametrize each of the deformed paths by its arc-length. Thus $\gamma: [0,s_p]\rightarrow {\mathbb R}^2$ describes a deformed path at parameter $p$, with $s_p$ corresponding to its arc-length.

\begin{definition}  \label{hom_adm} Given $\gamma,\eta \in \Gamma(\mbox{\sc x,y})$. A {\it bounded curvature homotopy}  between $\gamma: [0,s_0] \rightarrow \mathbb R^2$ and $\eta: [0,s_1] \rightarrow \mathbb R^2$ corresponds to a continuous one-parameter family of immersed paths $ {\mathcal H}_t: [0,1] \rightarrow \Gamma(\mbox{\sc x,y})$ such that:
\begin{itemize}
\item ${\mathcal H}_t(p): [0,s_p] \rightarrow \mathbb R^2$ for $t\in [0,s_p]$ is an element of $\Gamma(\mbox{\sc x,y})$ for all $p\in [0,1]$.
\item $ {\mathcal H}_t(0)=\gamma(t)$ for $t\in [0,s_0]$ and ${\mathcal H}_t(1)=\eta(t)$ for $t\in [0,s_1]$.
\end{itemize}
\end{definition}

The next remark summarizes well known facts about homotopy classes of paths on metric spaces. These facts are naturally adapted for elements in $\Gamma(\mbox{\sc x,y})$ with $\mbox{\sc x,y}\in T{\mathbb R^2}$. Recall that we are considering $\Gamma(\mbox{\sc x,y})$ with the topology induced by the $C^1$ metric.

\begin{remark}(\it On homotopy classes). Given $\mbox{\sc x,y}\in T{\mathbb R^2}$ then:
\end{remark}
\begin{itemize}
\item Two bounded curvature paths are {\it bounded-homotopic} if there exists a bounded curvature homotopy from one path to another. The previously described relation defined by $\sim$ is an equivalence relation.
\item A {\it homotopy class} in $\Gamma(\mbox{\sc x,y})$ corresponds to an equivalence class in $\Gamma(\mbox{\sc x,y})/\sim$.
\item A {\it homotopy class} is a maximal path connected set in $\Gamma(\mbox{\sc x,y})$.
\end{itemize}

\begin{definition}  A bounded curvature path is said to be {\it free} if is bounded-homotopic to a path of arbitrary large length.
\end{definition}

\section{Proximity of endpoints}\label{proximity}

Next we obtain four simple pairs of inequalities. These allow us to reduce the study of configurations of endpoints in $T\mathbb R^2$ to a finite number of cases (up to isometries). One of these conditions partially characterize the spaces of bounded curvature paths containing only embedded paths. 

 The following relations are obtained by analysing the configurations for the adjacent circles in the plane  (see Fig. \ref{figccproxcond2}).


 \begin{equation} d(c_l(\mbox{\sc x}),c_l(\mbox{\sc y}))\geq 4 \quad \mbox{and}\quad d(c_r(\mbox{\sc x}),c_r(\mbox{\sc y}))\geq4 \label{con_a}\tag{i}\end{equation}
 \begin{equation} d(c_l(\mbox{\sc x}),c_l(\mbox{\sc y}))< 4 \quad \mbox{and}\quad d(c_r(\mbox{\sc x}),c_r(\mbox{\sc y}))\geq 4 \label{con_b}\tag{ii} \end{equation}
  \begin{equation} d(c_l(\mbox{\sc x}),c_l(\mbox{\sc y}))\geq4 \quad \mbox{and}\quad d(c_r(\mbox{\sc x}),c_r(\mbox{\sc y}))< 4  \label{con_b'}\tag{iii} \end{equation}
   \begin{equation} d(c_l(\mbox{\sc x}),c_l(\mbox{\sc y}))< 4 \quad \mbox{and}\quad d(c_r(\mbox{\sc x}),c_r(\mbox{\sc y}))< 4 \label{con_c}\tag{iv} \end{equation}
   \vspace{.3cm}

Note that by definition $d(c_l(\mbox{\sc x}),c_r(\mbox{\sc x}))=2$ and $d(c_l(\mbox{\sc y}),c_r(\mbox{\sc y}))=2$.  In addition, it is easy to see that the first two paths in Fig. \ref{figadj} satisfiy (i). In Fig. \ref{figc} we emphasise that several configurations are possible under condition (iv).

\begin{figure}[h]
	\centering
	\includegraphics[width=.7\textwidth,angle=0]{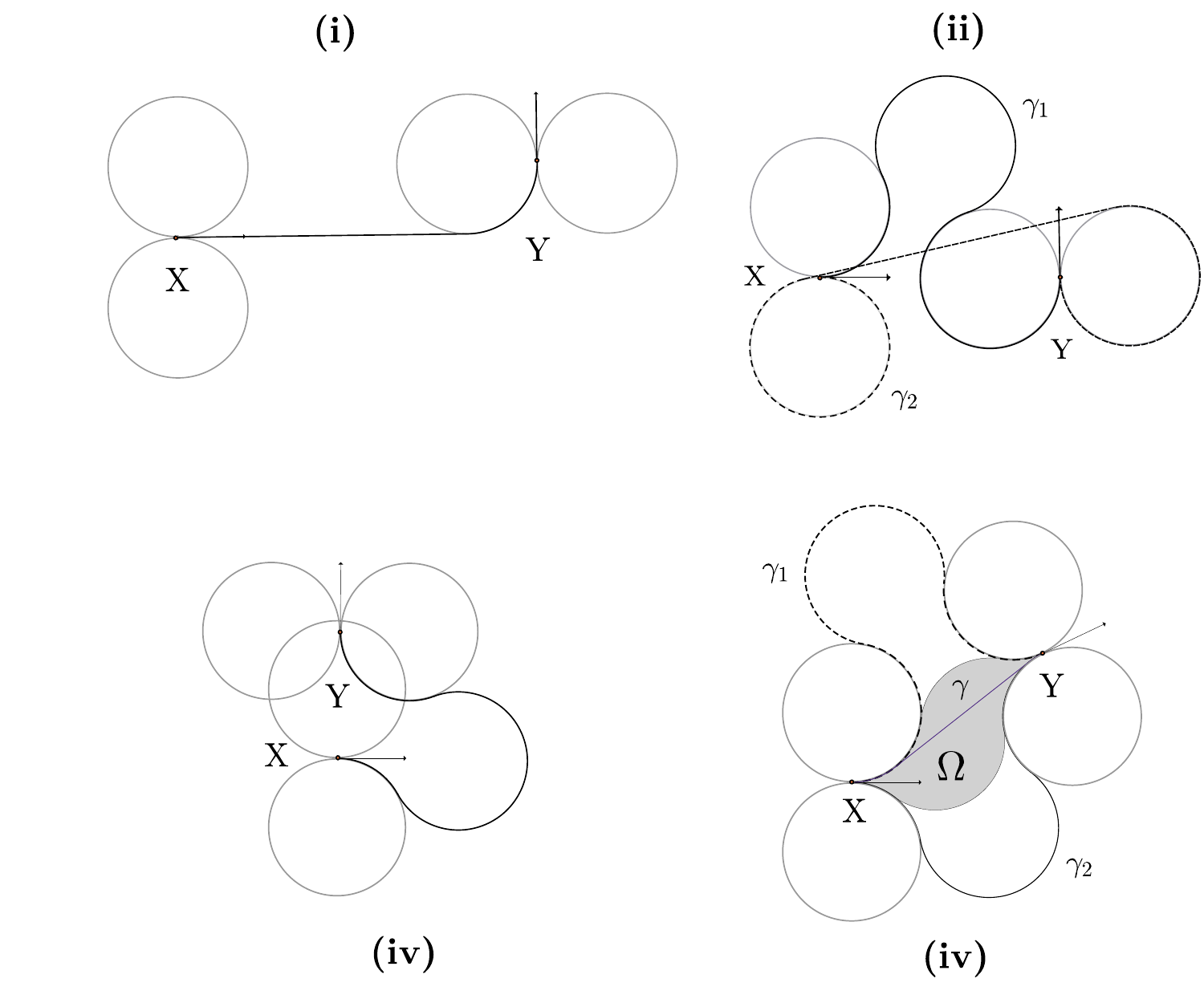}
\caption{Examples of endpoints satisfying (i), (ii) and (iv). Note that (ii) and (iii) are equivalent up to isometries. Upper right corner: Are $\gamma_1$ and $\gamma_2$ bounded-homotopic? Lower right corner: In Theorem \ref{mainresultp1} we prove that $\gamma_1$ and $\gamma_2$ are not bounded-homotopic to $\gamma$. All the paths shown are local minima of length. }
\label{figccproxcond2}
\end{figure}

\begin{figure}[h]
	\centering
	\includegraphics[width=.8\textwidth,angle=0]{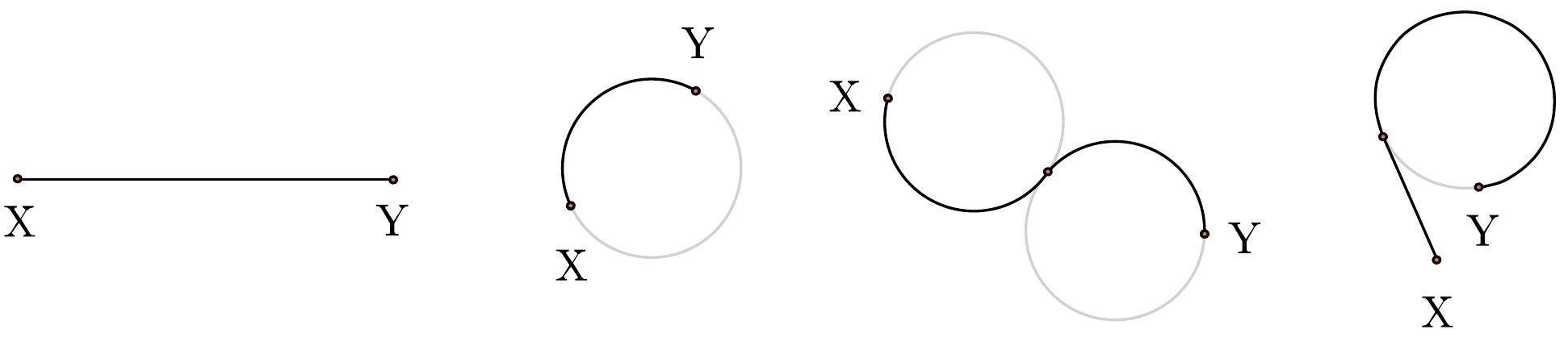}
\caption{Examples of bounded curvature paths whose adjacent circles (omitted in the illustration) satisfy condition (iv). In Theorem \ref{c} we show that the second and third paths are isolated points in their respective spaces $\Gamma(\mbox{\sc x,y})$. Note that the last path has parallel tangents and therefore is free (see Proposition \ref{partan}). For the sake of clarity sometimes we omit the initial and final tangent vectors.}
 \label{figc}
\end{figure}

\begin{figure}[h]
	\centering
	\includegraphics[width=.6\textwidth,angle=0]{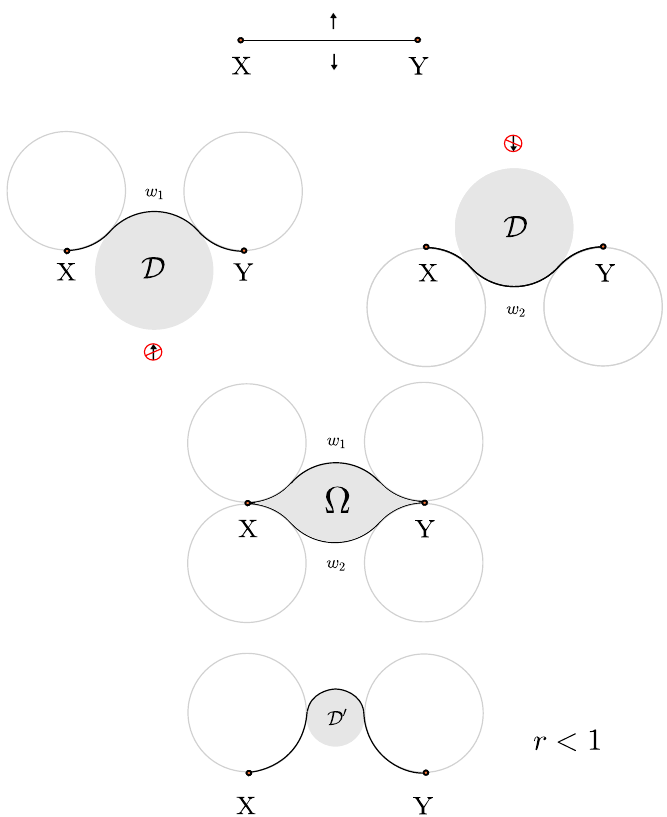}
\caption{Constructing regions $\Omega\subset \mathbb R^2$ as described in Remark \ref{regdefn}.}
 \label{figmoviereg}
\end{figure}

Next we describe a type of configuration satisfying (iv) of particular interest.

\begin{remark}{\bf (Constructing $\Omega\subset \mathbb R^2$)}. \label{regdefn} Consider $(x,X),(y,Y)\in T\mathbb R^2$ with $x=(0,0)$, $X=e^{2\pi i}\in T_x\mathbb R^2$, $y=(3,0)$ and $Y=e^{2\pi i}\in T_y\mathbb R^2$. Note that the endpoint condition satisfies (iv). Let $\gamma$ be the line segment joining $(x,X)$ with $(y,Y)$ (you may imagine that $\gamma$ is made out of rubber) see Fig. \ref{figmoviereg} top. By sliding a unit disk $\mathcal D$ along $\gamma$ up we obtain a path $w_1$. Note that $\mathcal D$ gets stuck in between $\mbox{\sc C}_ l(\mbox{\sc x})$ and $\mbox{\sc C}_ l(\mbox{\sc y})$. This happen since $d(c_l(\mbox{\sc x}),c_l(\mbox{\sc y}))< 4$ and because the initial and final points and directions are fixed. Note that the centers of the three circles involved are the vertices of an isosceles triangle with two sides of length 2 and the other one of length $d(c_l(\mbox{\sc x}),c_l(\mbox{\sc y}))$. If the centers of the three circles in question are collinear then $D'$ has radius less to 1 implying that the continuous deformation of $\gamma$ (see Fig. \ref{figmoviereg} bottom) violates the curvature bound. Analogously, a curve $w_2$ is obtained by a similar process, this time $\mathcal D$ gets stuck in between $\mbox{\sc C}_ r(\mbox{\sc x})$ and $\mbox{\sc C}_ r(\mbox{\sc y})$.

By concatenating $w_1$ and $w_2$ we obtain a simple closed plane curve. And, by the Jordan curve theorem, the complement of such a curve consists of exactly two connected components in $\mathbb R^2$. One is bounded while the other is unbounded.
\end{remark}

\begin{figure}[h]
	\centering
	\includegraphics[width=1\textwidth,angle=0]{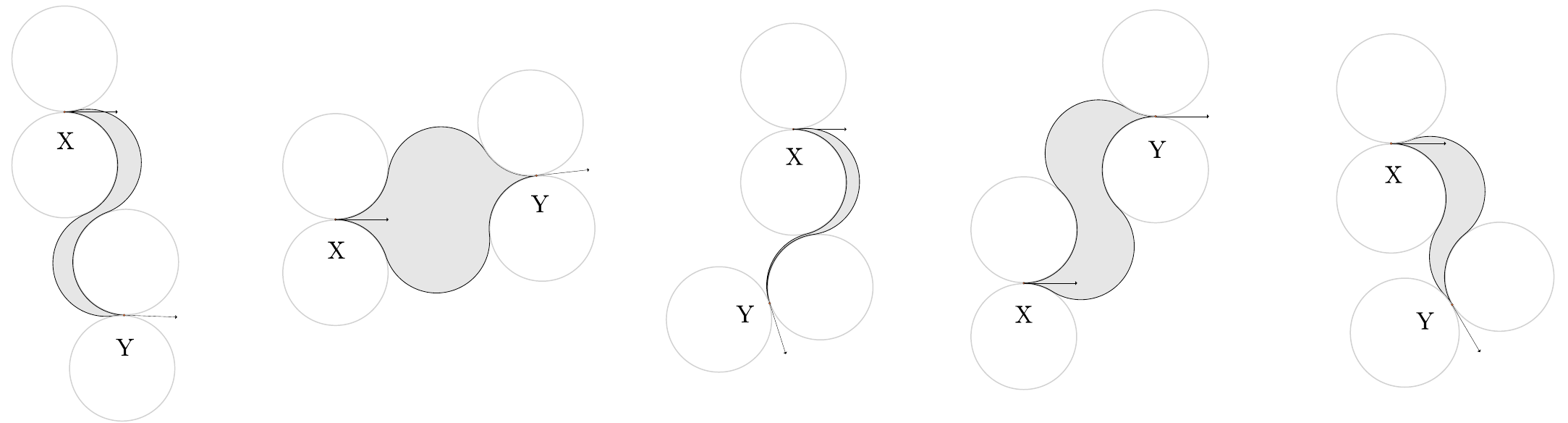}
\caption{In grey we illustrate some regions $\Omega\subset \mathbb R^2$ as described in Remark \ref{regdefn}. Note that the shape of the regions is determined by the initial and final points in $T\mathbb R^2$. Can these spaces of paths be parametrized so that we can describe exactly for what points in $\mbox{\sc x,y}\in T{\mathbb R}^2$ these regions $\Omega$ are obtained?}
	\label{figregex}
\end{figure}

\begin{definition} Suppose that the construction in Remark \ref{regdefn} can be performed.  Let $\Omega\subset \mathbb R^2$ be the closure of the bounded component enclosed by $w_1$ and $w_2$ in Remark \ref{regdefn}. In this case we say that $\mbox{\sc x,y}\in T{\mathbb R}^2$ carries a region $\Omega$.  A path $\gamma: I \to \mathbb R^2$ is said to be {\it in} $\Omega$ if $\gamma(t)\in \Omega$ for all $t\in I$. Otherwise $\gamma$ is said to be {\it not in} $\Omega$.  The boundary of $\Omega$ is denoted by $\partial \Omega$.
\end{definition}

\begin{remark} \label{condcconf}If $\mbox{\sc x,y}\in T{\mathbb R}^2$ satisfies (iv) we have three mutually exclusively cases:
\begin{itemize}
\item $\mbox{\sc x,y}\in T\mathbb R^2$ carries a region $\Omega\subset \mathbb R^2$ (see Fig. \ref{figregex}).
\item $\mbox{\sc x,y}\in T{\mathbb R}^2$ is the endpoint condition of a path consisting of a single arc of a unit circle of length less than $\pi$ or, $\mbox{\sc x,y}\in T{\mathbb R}^2$ is the endpoint condition of a path consisting of a concatenation of two arcs of unit circles each of length less than $\pi$ (see the second and third illustrations in Fig. \ref{figc}).
\item $\mbox{\sc x,y}\in T{\mathbb R}^2$ is the endpoint condition of a free path (see Fig. \ref{figc} right).
\end{itemize}
\end{remark}

\begin{definition}\label{condcnoreg} \hfill 
\begin{itemize}
\item If $\mbox{\sc x,y}\in T\mathbb R^2$ satisfies (i) we say that $\Gamma({\mbox{\sc x,y}})$ satisfies condition {\sc A}.
\item If $\mbox{\sc x,y}\in T\mathbb R^2$ satisfies (ii) or (iii) we say that $\Gamma({\mbox{\sc x,y}})$ satisfies condition {\sc B}.
\item If $\mbox{\sc x,y}\in T\mathbb R^2$ satisfies (iv) and $\Gamma({\mbox{\sc x,y}})$ contains a path that has as a subpath being:
\begin{itemize}
\item an arc of circle of length greater than or equal to $\pi $, or
\item a line segment of length greater than or equal to $4$.
\end{itemize}
we say that $\Gamma({\mbox{\sc x,y}})$ satisfies condition {\sc C}.
\end{itemize}
\end{definition}

\begin{definition}\label{defd} Suppose that $\mbox{\sc x,y}\in T\mathbb R^2$ satisfies (iv). We say that $\Gamma({\mbox{\sc x,y}})$ satisfies condition {\sc D} if:

\begin{itemize}
\item $\mbox{\sc x,y}\in T\mathbb R^2$ carries a region $\Omega\subset \mathbb R^2$, or
\item $\Gamma(\mbox{\sc x,y})$ contains a path consisting of an arc of a unit circle of length less than $\pi $, or
\item $\Gamma(\mbox{\sc x,y})$ contains a path being a concatenation of two arcs of unit circle of length less than $\pi $ each.
\end{itemize}
\end{definition}

It is not hard to see that the three items in Definition \ref{defd} are mutually exclusive. 



\section{Some remarks about $\Omega$} \label{domaincp}

Recall that in Remark \ref{regdefn} we obtained the curves $w_1$ and $w_2$ being each a bounded curvature path consisting of a concatenation of three arcs of unit circle. The four inflection points in $w_1$ and $w_2$ shown in Fig.~\ref{fignot} are denoted by $i$ and are indexed as elements in ${\mathbb R}^2$ with $|| i_k ||\leq ||i_{k+1}||$ for $k=1,2,3$. 

Set $\Theta_k$ as the smallest circular arc on the appropriate adjacent circle (whose length we denote by $\theta_k$) starting from $x$ or $y$ and finishing at $i_k$ with $1\leq k \leq 4$ (see Fig. \ref{fignot}). Denote by $\ell_1$ and $\ell_2$ the lines joining the first two and the last two indexed inflection points respectively. 

\begin{figure}[h]
	\centering
	\includegraphics[width=.7\textwidth,angle=0]{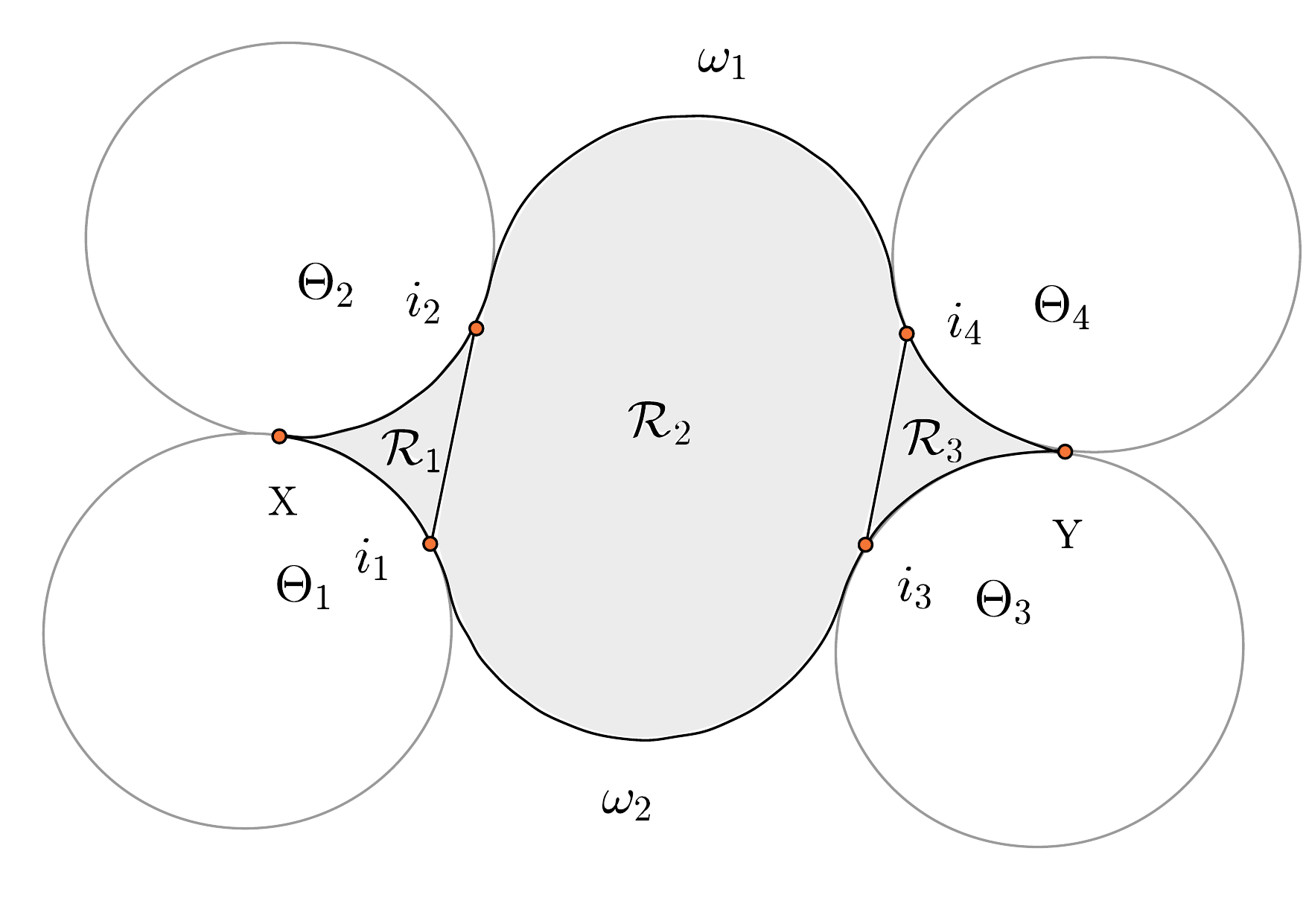}
	\caption{Notation for a generic $\Omega\subset \mathbb R^2$.}
\label{fignot}
\end{figure}

We subdivide $\Omega\subset \mathbb R^2$ into three subregions as follows:

\begin{itemize}
\item Let ${\mathcal R}_1$ be the closed portion of $\Omega$ which is to the left of $\ell_1$.
\item Let ${\mathcal R}_2$ be the closed portion of $\Omega$ which is between $\ell_1$ and $\ell_2$.
\item Let ${\mathcal R}_3$ be the closed portion of $\Omega$ which is to the right of $\ell_2$.
\end{itemize}

If $\theta_2>\frac{\pi}{2}$, then $\ell_1$ crosses twice the adjacent arc $\Theta_2$. In this case we replace $\ell_1$ by the line joining $i_1$ and a point $z$ in the adjacent arc $\Theta_2$ such that $\ell_1$ is tangent to $\Theta_2$ at $z$. The same idea is applied if $\theta_4>\frac{\pi}{2}$.

Recall that under condition {\rm D} we have that:
$$d(c_l(\mbox{\sc x}),c_l(\mbox{\sc y}))<4 \qquad \mbox{and}  \qquad d(c_r(\mbox{\sc x}),c_r(\mbox{\sc y}))<4$$

In particular we have that the center of the upper arc in $\partial {\mathcal R}_2$ is located below the line segment  with endpoints $c_l(\mbox{\sc x})$ and $c_l(\mbox{\sc y})$, and the center of the lower arc in $\partial {\mathcal R}_2$ is located above the line segment  with endpoints $c_r(\mbox{\sc x})$ and $c_r(\mbox{\sc y})$. Observe that otherwise the middle circular arcs in $\partial {\mathcal R}_2$ would have length greater or equal than $\pi$ which contradicts the formation of $\Omega$. 

\begin{lemma} \label{pi/2} The length of the circular arcs in $\partial \Omega$ satisfies:
\begin{itemize}
\item if $\theta_2\geq \frac{\pi}{2}$, then $\theta_1<\frac{\pi}{2}$.
 \item if $\theta_4\geq\frac{\pi}{2}$, then $\theta_3<\frac{\pi}{2}$.
 \end{itemize}
\end{lemma}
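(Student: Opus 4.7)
The plan is to argue by contradiction, using the arc condition of Remark \ref{centercircle} to convert each inequality $\theta_k\ge\pi/2$ into a strict bound on the second coordinate of $c_l(\mbox{\sc y})$ or $c_r(\mbox{\sc y})$, and then to observe that the two resulting bounds are incompatible with the antipodality $d(c_l(\mbox{\sc y}),c_r(\mbox{\sc y}))=2$. I shall write out only the first bullet, since the second follows by the entirely symmetric argument with the roles of $x$ and $y$ exchanged.

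First, I would place coordinates as in Remark \ref{coord}, so that $c_l(\mbox{\sc x})=(0,1)$ and $c_r(\mbox{\sc x})=(0,-1)$. In each of the qualitative configurations of Figure \ref{figwnzeronoqualy}, the two inflection points nearest to $x$ lie on the adjacent circles at $x$; the indexing $\|i_1\|\le\|i_2\|$ then identifies $\Theta_1$ with the arc on $C_l(\mbox{\sc x})$ from $x$ to $i_1$ and $\Theta_2$ with the arc on $C_r(\mbox{\sc x})$ from $x$ to $i_2$ (up to left--right symmetry). Arc length parametrization, together with the fact that two externally tangent unit circles meet at the midpoint of their centers, yields
\[ m_l=(2\sin\theta_1,\,1-2\cos\theta_1),\qquad m_r=(2\sin\theta_2,\,-1+2\cos\theta_2).\]

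Suppose for contradiction that both $\theta_1\ge\pi/2$ and $\theta_2\ge\pi/2$. Since $\cos\theta\le 0$ on $[\pi/2,\pi]$, the second coordinate of $m_l$ is at least $1$ and that of $m_r$ is at most $-1$. The arc condition places $m_l$ strictly below the segment $c_l(\mbox{\sc x})c_l(\mbox{\sc y})$; since that segment already contains the point $(0,1)$ of second coordinate $1$, no point with second coordinate $\ge 1$ can lie strictly below it unless $c_l(\mbox{\sc y})$ itself has second coordinate strictly greater than $1$. Symmetrically, the arc condition applied to $m_r$ forces the second coordinate of $c_r(\mbox{\sc y})$ to be strictly less than $-1$. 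Subtracting gives a difference of second coordinates of $c_l(\mbox{\sc y})$ and $c_r(\mbox{\sc y})$ strictly greater than $2$, which contradicts $d(c_l(\mbox{\sc y}),c_r(\mbox{\sc y}))=2$ since that distance bounds every coordinate difference in absolute value by $2$.

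The main obstacle I anticipate is the case analysis in the setup step. One must verify, using the patterns of Figure \ref{figwnzeronoqualy}, that whenever the hypothesis $\theta_2\ge\pi/2$ is realizable under condition {\rm D}, the indexing of the $i_k$ by distance from $x$ really does place both $i_1$ and $i_2$ on the two adjacent circles at $x$, as opposed to an intermixed configuration where one of them sits on an adjacent circle at $y$. Such mixed cases should either be ruled out a priori by the hypothesis itself or handled by the same argument after recentering coordinates at $y$; once this reduction is secured, the contradiction above completes the proof.
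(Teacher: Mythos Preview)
Your proposal is correct and follows essentially the same route as the paper: assume both $\theta_1,\theta_2\ge\pi/2$, invoke the arc condition to push $c_l(\mbox{\sc y})$ strictly above height $1$ and $c_r(\mbox{\sc y})$ strictly below height $-1$, and contradict $d(c_l(\mbox{\sc y}),c_r(\mbox{\sc y}))=2$. The only cosmetic difference is that the paper expresses the arc condition through the half-planes bounded by the lines $l_k$ joining $c_l(\mbox{\sc x}),i_1$ and $c_r(\mbox{\sc x}),i_2$, while you compute the middle-arc centres explicitly; the indexing issue you flag is likewise present (tacitly) in the paper's own argument.
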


\begin{proof} Suppose that $\theta_1\geq \frac{\pi}{2}$, with $\theta_2\geq\frac{\pi}{2}$. Consider the following subsets of ${\mathbb R}^2$ $\mathcal{S}^+=\{(u,v)\in{\mathbb R}^2\,|\, u\geq 0\,\,v>1\}$ and $ \mathcal{S}^-=\{(u,v)\in{\mathbb R}^2\,|\, u\geq 0\,\,v<-1\}$. Without loss of generality suppose that $\theta_1$ lies in  $\mbox{\sc C}_l(\mbox{\sc x})$, then consider the line $l_1$ starting at $c_l(\mbox{\sc x})$ and passing through the inflection point $i_1$. Note that the center of the boundary arc $\partial{\mathcal R}_2$ adjacent to  $\mbox{\sc C}_l(\mbox{\sc x})$ must lie on $l_1$. Since the length of the boundary arc $\partial{\mathcal R}_2$ adjacent to  $\mbox{\sc C}_l(\mbox{\sc x})$ is less than or equal to $\pi$ this implies that $c_l(\mbox{\sc y})$ lies in the interior of the upper half space of $l_1$. Similarly, if $l_2$ is the line passing through $c_r(\mbox{\sc x})$ and $i_2$, then $c_r(\mbox{\sc y})$ lies in the interior of the lower half space of $l_2$. Observe that $c_l(\mbox{\sc y})$ lies in the interior of ${\mathcal S}^+$ and that $c_r(\mbox{\sc y})$ lies in the interior of ${\mathcal S}^-$ implying that  $d(c_l(\mbox{\sc y}),c_r(\mbox{\sc y})>2$ which leads to a contradiction. An analogous approach proves the second statement.
\end{proof}


\begin{corollary} \rm  $\partial \Omega$ contains at most two arcs $\Theta_k$ such that, $\theta_k >\frac{\pi}{2}$.
\end{corollary}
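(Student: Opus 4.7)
The plan is to read the corollary directly off Lemma \ref{pi/2} by a pigeonhole-style argument on the two natural pairs of boundary arcs. First I would split the four arcs $\Theta_1,\Theta_2,\Theta_3,\Theta_4$ into the pair $\{\Theta_1,\Theta_2\}$, which emanate from the first two inflection points $i_1,i_2$ and correspond to the adjacent arcs nearest the origin, and the pair $\{\Theta_3,\Theta_4\}$, which emanate from $i_3,i_4$. This pairing is exactly the one to which the two bullets of Lemma \ref{pi/2} apply.

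Next I would apply the first bullet of Lemma \ref{pi/2} by contraposition. Suppose for contradiction both $\theta_1 \geq \pi/2$ and $\theta_2 \geq \pi/2$. The hypothesis $\theta_2 \geq \pi/2$ forces $\theta_1 < \pi/2$, contradicting $\theta_1 \geq \pi/2$. Hence at most one of $\theta_1,\theta_2$ is $\geq \pi/2$, so in particular at most one satisfies the strict inequality $\theta_k > \pi/2$. An identical argument using the second bullet of Lemma \ref{pi/2} yields that at most one of $\theta_3,\theta_4$ satisfies $\theta_k > \pi/2$. Adding these two counts gives at most two arcs $\Theta_k$ in $\partial\Omega$ with $\theta_k > \pi/2$, which is the claim.

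No step here is a genuine obstacle; the corollary is essentially a bookkeeping restatement of the lemma. The only thing to be careful about is not to over-count: one should note that the lemma's two implications concern disjoint index pairs, so they combine additively rather than overlapping. Once this is observed, the proof is a one-line consequence.
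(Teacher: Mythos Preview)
Your argument is correct and follows the same pigeonhole idea as the paper: any three of the four indices must contain one of the pairs $\{1,2\}$ or $\{3,4\}$, and Lemma~\ref{pi/2} rules out both members of such a pair being at least $\pi/2$. The only cosmetic difference is that the paper phrases it as a case analysis over the four possible triples and re-quotes the geometric inequality $d(c_l(\mbox{\sc y}),c_r(\mbox{\sc y}))>2$ from the proof of the lemma, whereas you invoke the lemma's statement directly and count; your version is the cleaner of the two.
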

\begin{proof} Suppose that $\theta_k>\frac{\pi}{2}$, for three indices. So, we have $4$ possible arrangements satisfying the hypothesis, these are $\{\theta_1,\theta_2,\theta_3\}$,  $\{\theta_1,\theta_2,\theta_4\}$, $\{\theta_1,\theta_3,\theta_4\}$, and $\{\theta_2,\theta_3,\theta_4\}$. Observe that for $\theta_1,\theta_2>\frac{\pi}{2}$ Lemma \ref{pi/2} immediately implies that $d(c_l(\mbox{\sc y}),c_r(\mbox{\sc y}))>2$. Similarly if $\theta_3,\theta_4>\frac{\pi}{2}$ by the same argument implies that $d(c_l(\mbox{\sc x}),c_r(\mbox{\sc x}))>2$. Since $\theta_1,\theta_2$ or $\theta_3,\theta_4$ are contained in all the possible arrangements of three angles the result follows.
\end{proof}

\begin{remark}Observe that the combinations $\theta_1,\theta_3>\frac{\pi}{2}$;  $\theta_1,\theta_4>\frac{\pi}{2}$; $\theta_2,\theta_3>\frac{\pi}{2}$; $\theta_2,\theta_4>\frac{\pi}{2}$ are the only possible configurations for the lengths of the adjacent arcs when $\Omega$ contains two arcs $\Theta_k$ with lengths greater than $\frac{\pi}{2}$.
\end{remark}

\section{On the diameter of $\Omega$}\label{secdiam}

In Section \ref{sectsthm} we prove a core result characterizing embedded bounded curvature paths in $\Omega\subset \mathbb R^2$. The S-Lemma (Lemma \ref{sthm}) relates the diameter of $\Omega$, the turning map, together with the existence of maximal inflection points. In this section we give an upper bound for the diameter of $\Omega$ for all $\mbox{\sc x,y}\in T\mathbb R^2$ satisfying condition D.


\begin{proposition}\label{ntc} The two unit disks defined by extending the two middle arcs in $\partial{\mathcal R}_2$ have intersecting interiors.
\end{proposition}

\begin{proof} Recall that $\Omega\subset \mathbb R^2$ is obtained when simultaneously,
$$d(c_l(\mbox{\sc x}),c_l(\mbox{\sc y}))<4 \quad \mbox{and} \quad d(c_r(\mbox{\sc x}),c_r(\mbox{\sc y}))<4.$$
Suppose the two unit radius disks ${\mathcal D}_1,{\mathcal D}_2$ defined by the middle arcs have disjoint interiors. The distance of their centers $c_1,c_2$ satisfies $d(c_1,c_2)\geq2$. Let $\mathcal Q$ be the quadrilateral with vertices $c_l(\mbox{\sc x}), c_r(\mbox{\sc x}),c_l(\mbox{\sc y})$, and $c_r(\mbox{\sc y})$. Observe that $d(c_l(\mbox{\sc x}),c_l(\mbox{\sc y}))<4$ and $d(c_r(\mbox{\sc x}),c_r(\mbox{\sc y}))<4$ implies that $c_1$ and $c_2$ are points in the interior of ${\mathcal Q}$. By condition D, the middle arcs in $\partial {\mathcal R}_2$ have length less than $\pi$.  Observe that,
$$2=d(c_l(\mbox{\sc x}),c_1)=d(c_l(\mbox{\sc y}),c_1)=d(c_r(\mbox{\sc x}),c_2)=d(c_r(\mbox{\sc y}),c_2)=$$
$$d(c_1,c_2)=d(c_l(\mbox{\sc x}),c_l(\mbox{\sc y}))=d(c_r(\mbox{\sc x}),c_r(\mbox{\sc y}))$$
  It is easy to see that $c_1$ and $c_2$ are in ${\mathcal Q}$ and that $d(c_1,c_2)\geq 2$ is impossible.
 \end{proof}

\begin{corollary}\label{distR2} If $w$ and $z$ belong to opposite components of  $\partial {\mathcal R}_2$ then, $$d(w,z)<4.$$
\end{corollary}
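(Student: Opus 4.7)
The plan is to use Proposition \ref{ntc} directly together with the triangle inequality. Let $w, z$ lie on the two opposite middle arcs of $\partial \mathcal{R}_2$. Call these arcs $A_1$ and $A_2$, and let them lie on the unit circles $\mathbb{D}_1, \mathbb{D}_2$ with centers $c_1$ and $c_2$ respectively (these are the unit disks referenced in Proposition \ref{ntc}).

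Since $w \in A_1 \subset \partial \mathbb{D}_1$ and $z \in A_2 \subset \partial \mathbb{D}_2$, we have $d(w, c_1) = 1$ and $d(z, c_2) = 1$. By Proposition \ref{ntc}, the interiors of $\mathbb{D}_1$ and $\mathbb{D}_2$ intersect, which for unit disks is equivalent to $d(c_1, c_2) < 2$. The triangle inequality then gives
\[
d(w, z) \;\leq\; d(w, c_1) + d(c_1, c_2) + d(c_2, z) \;=\; 1 + d(c_1, c_2) + 1 \;<\; 4,
\]
which is exactly the desired bound.

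There is no real obstacle here, since all the geometric content has been packaged into Proposition \ref{ntc}; the corollary is simply the metric consequence of that proposition. The only thing to double-check is that the unit disks $\mathbb{D}_1, \mathbb{D}_2$ used in Proposition \ref{ntc} are indeed the ones whose boundary arcs are the opposite components of $\partial \mathcal{R}_2$ along which $w$ and $z$ vary, but this is essentially the definition of the middle arcs of $\partial \mathcal{R}_2$ given in Section~\ref{domaincp}.
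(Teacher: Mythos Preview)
Your proof is correct and follows essentially the same approach as the paper: both invoke Proposition~\ref{ntc} to conclude that the two unit circles containing the opposite arcs of $\partial\mathcal{R}_2$ overlap (equivalently $d(c_1,c_2)<2$), and then deduce the bound $d(w,z)<4$; you have simply made the final step explicit via the triangle inequality, whereas the paper states it in one line.
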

\begin{proof}  After extending the middle arcs in $\partial{\mathcal R}_2$ to circles we see that Proposition \ref{ntc} ensures that these circles overlap. Therefore the distance of any two points in opposite arcs of $\partial{\mathcal R}_2$ is bounded above by $4$.
\end{proof}


\begin{remark} \label{minmax} Given two disjoint circles ${C}_1$ and ${C}_2$ in the plane. The line connecting their centers intersects the circles at four points $\{w, x, y, z\}$; we denote by ${XY}$ the line segment joining the points $x\in C_1$ and $y\in C_2$, which has interior disjoint from the circles, and by ${WZ}$ the long segment joining the points $w\in C_1$ and $z\in C_2$. The following claims are valid since under the presented conditions there are only four critical points for the distance function between points on the circles, one maximum, one minimum and two saddle points:

\begin{itemize}
\item The maximum of the distances between points in  $C_1$ and $C_2$ is given by the endpoints of the segment $WZ$. That is,
\[ \ \operatorname*{max}_{a\in C_1\,\, b\in C_2} d(a,b)=d(z,w).\]

 \item The minimum of the distances of points in  $C_1$ and $C_2$ is given by the endpoints of the segment $XY$. That is,
\[ \ \operatorname*{min}_{a\in C_1\,\, b\in C_2} d(a,b)=d(x,y).\]
\end{itemize}
We leave the details to the reader.
\end{remark}

\begin{theorem}\label{diaml4} The diameter of $\Omega\subset \mathbb R^2$ is strictly bounded by 4.
\end{theorem}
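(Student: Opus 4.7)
The plan is as follows. By Lemma \ref{boundary}, the diameter of $\Omega$ is realized by two points $z, w \in \partial \Omega$, so it suffices to show $d(z,w)<4$ for every such pair. The boundary $\partial \Omega$ decomposes into six unit-circle arcs: the four adjacent arcs $\Theta_1,\Theta_2,\Theta_3,\Theta_4$ (two emanating from $x$ and two from $y$) together with the two middle arcs bounding $\mathcal{R}_2$. I would proceed by case analysis on which pair of arcs contains $z$ and $w$.

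\emph{Easy cases.} If $z, w$ lie on a single arc, then $d(z,w) \leq 2$. If $z,w$ lie on opposite middle arcs, Corollary \ref{distR2} gives $d(z,w) < 4$ immediately. If $z\in \Theta_1$ and $w\in \Theta_2$ (the two adjacent arcs meeting at $x$), Lemma \ref{pi/2} forces at most one of $\theta_1,\theta_2$ to be $\geq \pi/2$; without loss of generality $\theta_1 < \pi/2$, so the chord bound gives $d(z,x) \leq 2\sin(\theta_1/2) < \sqrt{2}$, while $d(x,w) \leq 2$ since $w$ lies on a unit circle through $x$. Triangle inequality through $x$ yields $d(z,w) < 2 + \sqrt{2} < 4$. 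The symmetric situation for $\Theta_3, \Theta_4$ at $y$ is identical.

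\emph{Cross cases.} The remaining configurations are: (i) one point on an adjacent arc $\Theta_k$ and the other on a middle arc whose supporting circle is tangent to $\mbox{\sc C}_l(\mbox{\sc x}), \mbox{\sc C}_r(\mbox{\sc x}), \mbox{\sc C}_l(\mbox{\sc y})$ or $\mbox{\sc C}_r(\mbox{\sc y})$ at the inflection point $i_k$; and (ii) $z, w$ on adjacent arcs based at different endpoints. For (i), the two supporting unit circles are externally tangent (their centers at distance $2$), so Remark \ref{minmax} gives only $d(z,w)\leq 4$; strict inequality is extracted from the arc condition of Remark \ref{centercircle}, which prevents either arc from containing the antipodal diametral point of its supporting circle. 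For (ii), the same-side subcase (say $\Theta_1 \subset \mbox{\sc C}_l(\mbox{\sc x})$ paired with $\Theta_4 \subset \mbox{\sc C}_l(\mbox{\sc y})$) combines the strict proximity inequality $d(c_l(\mbox{\sc x}),c_l(\mbox{\sc y})) < 4$ with the arc condition to exclude the diametral extrema on the line through the two centers; the diagonal subcase ($\Theta_1$ against $\Theta_3$, etc.) is handled by routing through $x$ and $y$ using $d(x,y) < 4$ and the chord bounds at each endpoint.

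\emph{Main obstacle.} The difficulty concentrates in the cross cases, where the naive bound from Remark \ref{minmax} is tight at $4$ and so fails to give strict inequality. The strict bound must be produced by noting that the boundary arcs are proper subarcs of their supporting circles, with the forbidden portions pinned down explicitly by Lemma \ref{pi/2} and the arc condition of Remark \ref{centercircle}, and combined with the strictness of proximity condition (iv).
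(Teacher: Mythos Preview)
Your approach is a direct case analysis over pairs of boundary arcs, whereas the paper argues via the critical-point structure of the distance function on $\partial\Omega\times\partial\Omega$: it shows (using Lemma~\ref{locmax} and Remark~\ref{minmax}) that the only candidate local maxima are the pair $(x,y)$ and the diametral pair coming from the two middle circles, then bounds each of those by $4$ using condition~(iv) and Corollary~\ref{distR2}. The paper never has to estimate distances on the ``cross'' arc pairs at all, because those pairs are ruled out as saddles or minima a priori.

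Your route has a genuine gap precisely in those cross cases. In the diagonal subcase ($z\in\Theta_1$, $w\in\Theta_3$), routing through $x$ and $y$ by the triangle inequality gives only
\[
d(z,w)\;\le\; d(z,x)+d(x,y)+d(y,w)\;<\;2+4+2\;=\;8,
\]
which is useless for a bound of $4$. In the same-side subcase ($z\in\Theta_1\subset\mbox{\sc C}_l(\mbox{\sc x})$, $w\in\Theta_k\subset\mbox{\sc C}_l(\mbox{\sc y})$), Remark~\ref{minmax} yields $d(z,w)\le d(c_l(\mbox{\sc x}),c_l(\mbox{\sc y}))+2<6$, and the arc condition of Remark~\ref{centercircle} constrains the \emph{middle} arcs' centers, not the adjacent arcs $\Theta_k$; it does not by itself place the diametral extrema of $\mbox{\sc C}_l(\mbox{\sc x})$ and $\mbox{\sc C}_l(\mbox{\sc y})$ off of $\Theta_1,\Theta_k$. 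Similarly in case~(i) (adjacent arc against a middle arc) the tangent circles give the tight bound $4$, and again you invoke the arc condition to get strictness, but that remark speaks to the middle-arc center position relative to the segment $c_l(\mbox{\sc x})c_l(\mbox{\sc y})$, not to whether the antipode of $i_k$ lies on $\Theta_k$. To repair this you would need additional geometric lemmas locating the diametral points relative to each $\Theta_k$; the cleaner fix is the paper's: show that a diameter-realizing pair is a critical point, and that no critical point of local-maximum type sits on these arc pairs.
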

\begin{proof} Since the region $\Omega$ is compact and the Euclidean distance is continuous we have that $d:\Omega \times \Omega \rightarrow {\mathbb R}^{\geq 0}$ attains a maximum. Naturally, the maximum of $d$ must be achieved in $\partial \Omega$. In addition, note that $d(x,y)$ may or may not be a local maximum whenever $\Omega$ exists, see Fig. \ref{figregex} for examples of both cases.
Corollary \ref{distR2} establishes an upper bound for the distances between the opposite arcs in $\partial{\mathcal R}_2$ and, by the first item in Remark \ref{minmax} extended to overlapping circles such a value is a local maximum only if such arcs intersect the line joining the respective centers of the overlapping circles inside $\Omega$. The second item in Remark \ref{minmax} establishes that the distances between opposite arcs in $\partial{\mathcal R}_1$ and $\partial{\mathcal R}_3$ are at most candidates to be saddles or local mimima. We have that: 
 $$\mbox{\rm diam}(\Omega)=\max \{d(x,y),d(w,z)\},$$
 concluding that $\mbox{\rm diam}(\Omega)<4$  as desired.
\end{proof}


\section{Existence of isolated points}

The proof of Theorem \ref{c}, the main result in this section, requires Lemma 3.1 in \cite{papere}. This lemma states that a bounded curvature path with the initial and final points being distant apart less than $2$ cannot be defined above a unit radius circle while exclusively lying in an open band of width $2$ and arbitrary height, see Fig. \ref{figband}. This lemma is independent of wether or not we fix the initial and final vectors. We encourage the reader to refer to \cite{papere} for details.

\begin{lemma}\label{r1} (Lemma 3.1 in \cite{papere}). A bounded curvature path $\gamma: I \rightarrow {\mathcal B}$ where,
$${\mathcal B}=\{(x,y)\in{\mathbb R}^2\,|\, -1<x<1\,\,,\,\,y\geq 0 \}$$
cannot satisfy both:
\begin{itemize}
\item $\gamma(0),\gamma(s)$ are points on the $x$-axis.
\item If $C$ is a unit circle with center on the negative $y$-axis, and $\gamma(0),\gamma(s)\in C$, then some point in $\mbox{\rm Im}(\gamma)$ lies above $C$.
\end{itemize}
\end{lemma}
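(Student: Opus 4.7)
The plan is to argue by contradiction: suppose $\gamma$ is a bounded curvature path in $\mathcal{B}$ satisfying both bullets. First I would set up coordinates so that the center of $C$ is at $c_{0}=(0,-a)$ with $a=\sqrt{1-x_{1}^{2}}$ and $\gamma(0)=(x_{1},0)$, $\gamma(s)=(-x_{1},0)$ on $C$: the two endpoints must be symmetric about the $y$-axis because $C$ has center on the negative $y$-axis and both endpoints on the $x$-axis. Since $\gamma\subset\mathcal{B}$ with its endpoints on $\{y=0\}$, the tangents satisfy $\gamma'_{y}(0)\geq 0$ and $\gamma'_{y}(s)\leq 0$.

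Next I would reduce to a clean subpath by examining $\rho(t)=\|\gamma(t)-c_{0}\|$. We have $\rho(0)=\rho(s)=1$ and, by hypothesis, $\rho(t^{*})>1$ for some $t^{*}$, so I can choose $t_{1}<t^{*}<t_{2}$ with $\rho(t_{1})=\rho(t_{2})=1$ and $\rho>1$ on $(t_{1},t_{2})$. The subpath $\tilde{\gamma}=\gamma|_{[t_{1},t_{2}]}$ lies strictly outside the closed disk bounded by $C$, has both endpoints on $C$, and stays in the strip $|x|<1$. At an interior maximum $t^{**}\in(t_{1},t_{2})$ of $\rho$, with value $\rho^{*}>1$, the tangent $\gamma'(t^{**})$ is orthogonal to $\gamma(t^{**})-c_{0}$, and $\rho''(t^{**})\leq 0$ together with $|\gamma''|\leq 1$ forces $\tilde{\gamma}$ to bend strongly inward toward $c_{0}$ at $t^{**}$.

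The main step, and what I expect to be the principal obstacle, is translating this local picture into a global contradiction using the strip. Introducing the tangent angle $\theta(t)$ of $\gamma$ and the polar angle $\phi(t)$ of $\gamma(t)-c_{0}$, and setting $\psi(t)=\theta(t)-\phi(t)$, the arclength condition gives $\rho'=\cos\psi$ and $\rho\phi'=\sin\psi$, while the curvature bound becomes $|\psi'+\sin\psi/\rho|\leq 1$; the strip condition $|x|<1$ translates to $|\rho\cos\phi|<1$, which forces $\phi$ to lie close to $\pm\pi/2$ as soon as $\rho$ is noticeably bigger than $1$. At $t^{**}$ one has $\psi=\pm\pi/2$, while $\rho'(t_{1})\geq 0\geq\rho'(t_{2})$ gives $\psi(t_{1})\in[-\pi/2,\pi/2]$ and $\psi(t_{2})\in[\pi/2,3\pi/2]$ (mod sign). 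The hard part will be combining this ODE with the ``no U-turn in a strict width-$2$ strip'' observation (a curvature-$\leq 1$ arc whose tangent rotates by $\pi$ undergoes perpendicular displacement of length $\geq 2$, incompatible with the strict inequality $|x|<1$ unless its principal direction is vertical), together with the endpoint tangent sign conditions. Via a case analysis on the signs of $\psi(t_{1}),\psi(t_{2})$ and on the direction of total tangent rotation of $\tilde{\gamma}$, I expect to show that the required trajectory of $(\rho,\phi,\psi)$ either forces $\gamma$ out of $\mathcal{B}$ or forces $\rho$ back to $1$ somewhere in $(t_{1},t_{2})$, contradicting the choice of $t_{1},t_{2}$ and completing the proof.
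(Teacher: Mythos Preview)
The paper does not actually prove this lemma: immediately before its statement the authors write ``For the sake of exposition we include without proof the following two results. Refer to \cite{thesisayala} for their proofs and comments.'' So there is no in-paper argument to compare against; one can only assess whether your plan is viable and how it relates to the tools the paper \emph{does} develop.

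Your setup is sound: the polar reformulation about $c_{0}$ with $\rho'=\cos\psi$, $\rho\phi'=\sin\psi$, $|\psi'+\tfrac{\sin\psi}{\rho}|\le 1$ is correct, the reduction to a subarc with $\rho>1$ on the interior is legitimate, and the observation that a monotone $\pi$-rotation of the tangent forces a perpendicular displacement of at least $2$ is provable (e.g.\ via $v=-\cos(\theta-\theta_{a})$ and $|v'|\le\sin(\theta-\theta_{a})$). The difficulty is exactly where you flag it: that displacement bound is in the direction orthogonal to the tangent at the start of the turn, which need not be the $x$-direction, so the strip constraint $|x|<1$ does not bite directly. Turning this into a contradiction really does require a delicate case analysis linking the orientation of each U-turn to the strip, and you have not indicated how those cases close; as written the proposal stops short of the crux. (A minor side point: ``the endpoints must be symmetric about the $y$-axis'' omits the possibility $\gamma(0)=\gamma(s)$.)

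Given that the paper \emph{does} prove Lemma~\ref{ntp}, a more economical route---and plausibly the one in \cite{thesisayala}---is a sliding-circle argument that reduces everything to that lemma. Translate $C$ upward to $C_{h}$, the unit circle centred at $(0,h-a)$, and take $h^{*}>0$ to be the supremum of $h$ for which some point of $\gamma$ lies above the upper semicircle of $C_{h}$. At $h^{*}$ the path lies weakly below that semicircle with a tangential contact; since $\gamma$ sits in the \emph{open} strip $|x|<1$ the contact occurs away from the two points where $C_{h^{*}}$ touches the strip walls, hence in a neighbourhood where ``below the upper semicircle'' coincides with ``inside $D_{h^{*}}$''. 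The local differential-inequality argument of Lemma~\ref{ntp} then forces $\gamma$ to coincide with $C_{h^{*}}$ there, and an open--closed continuation along the upper semicircle propagates this all the way to an endpoint of $\gamma$. But $\gamma(0),\gamma(s)\in C_{0}\cap\{y=0\}$, and one checks that such a point can lie on the upper semicircle of $C_{h^{*}}$ only for $h^{*}=0$, a contradiction. This avoids the ODE case analysis entirely.
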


The next definition will be of relevance when concluding the proof of the S-Lemma (see Lemma \ref{sthm}).

\begin{definition}\label{defncross} Let $L_1$ and $L_2$ be the lines $x=-1$ and $x=1$ respectively.  A line joining two points in $\gamma$ distant apart at least $2$ one to the left of $L_1$ and the other to the right of $L_2$ is called a cross section (see dashed trace in Fig. \ref{figband} right).
\end{definition}

\begin{definition} A plane curve $\gamma$ has {\it parallel tangents} if there exist $t_1,t_2 \in I$, with $t_1<t_2$, such that $\gamma'(t_1)$ and $\gamma'(t_2)$ are parallel and pointing in opposite directions (see Fig. \ref{figpartan}).
\end{definition}

\begin{figure}[h]
	\centering
	\includegraphics[width=1\textwidth,angle=0]{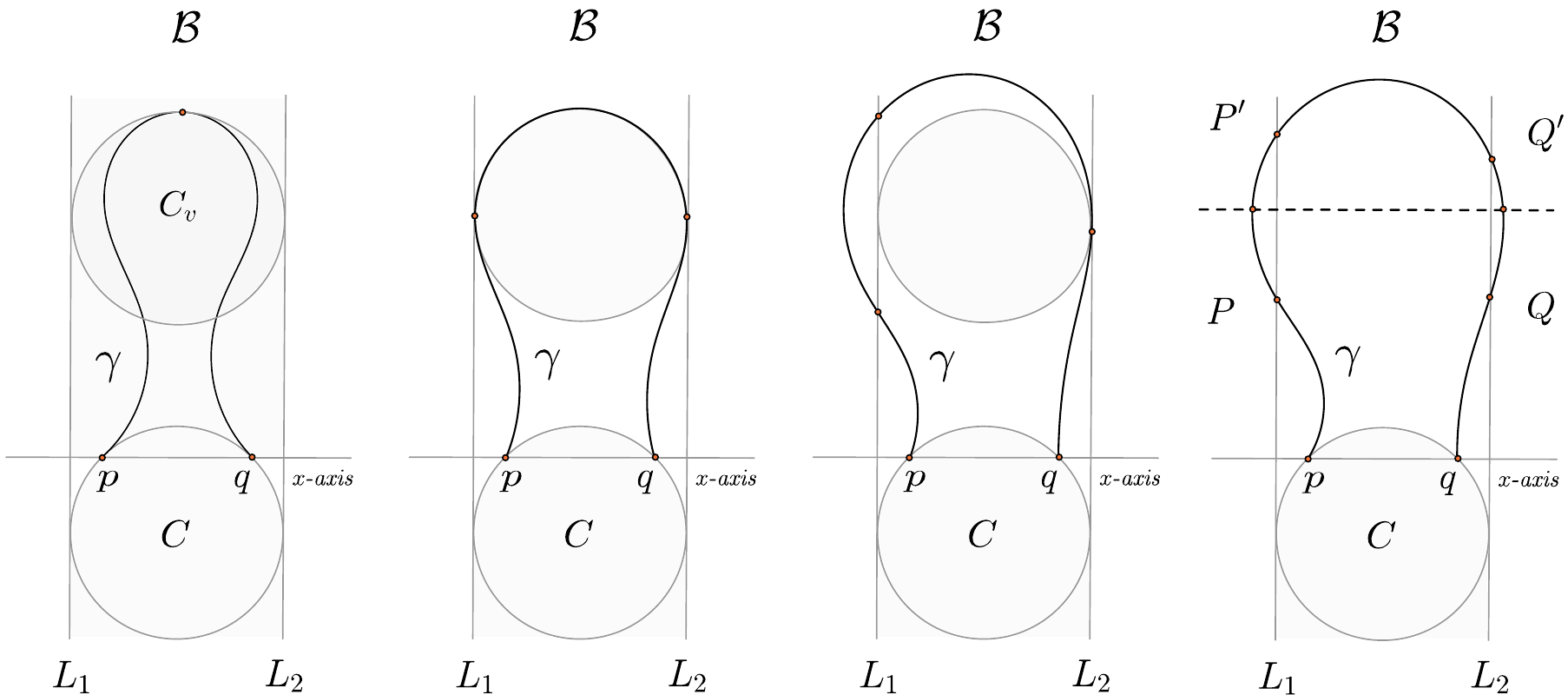}
\caption{An illustration of Corollary \ref{crossect}. Here $p$ and $q$ represent $\gamma(0)$ and ${\gamma(s)}$ respectively. We obtain the fourth illustration from the third one by clockwise rotating the band ${\mathcal B}$ with point of rotation the center of $C$. In this fashion we obtain a pair parallel tangents. The dashed trace at the right corresponds to a cross section for $\gamma$. Refer to \cite{papere} for details.}
\label{figband}
\end{figure}

\begin{corollary} \label{crossect} (See \cite{papere}). Suppose a bounded curvature path $\gamma: I \rightarrow {\mathbb R}^2$ satisfies:
\begin{itemize}
\item $\gamma(0),\gamma(s)$ are points on the $x$-axis.
\item If $C$ is a unit radius circle with centre on the negative $y$-axis, and $\gamma(0),\gamma(s)\in C$, then some point in ${Im}(\gamma)$ lies above $C$.
\end{itemize}
Then $\gamma$ admits parallel tangents and therefore a cross section.
\end{corollary}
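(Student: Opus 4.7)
I would argue by contradiction. Assume that $\gamma$ admits no pair of parallel tangents. Since the continuous map $\gamma': I \to S^1$ then avoids every antipodal pair, its image is contained in an open half-circle, and there exists a unit vector $u$ with $\langle \gamma'(t), u \rangle > 0$ for every $t \in I$. Consequently $\gamma$ is strictly monotone in the direction $u$. Integrating and using that $\gamma(s) - \gamma(0)$ is horizontal (both endpoints lying on the $x$-axis) forces $u$ to have nonzero horizontal component; after a reflection or reversal of parametrization, I may take $u$ in the right half-plane.

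Next I would examine the height function $\gamma_y$. The ``above $C$'' hypothesis forces $\max_t \gamma_y(t) > 0$, attained at some interior parameter $t_M$, where the tangent $\gamma'(t_M)$ is horizontal. The choice of $u$ above pins this tangent to $(1,0)$, and since the open half-circle of tangent values contains $(1,0)$ it misses $(-1,0)$; hence $\gamma_x'(t) > 0$ throughout $I$. Thus $\gamma$ is the graph of a $C^1$ function $y = f(x)$ on $[p,q]$ with $f(p) = f(q) = 0$, and the existence of the unit circle $C$ with center on the negative $y$-axis forces $q - p < 2$.

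For the contradiction, I would rotate the configuration about the center of $C$ so that the monotonicity direction $u$ aligns with the vertical axis of the band $\mathcal{B}$. Under this rotation $C$ is fixed setwise, the endpoints remain on the rotated $x$-axis, and the ``above $C$'' point remains above; the strict monotonicity in the new vertical direction together with the bound $q - p < 2$ on the endpoint separation places the rotated $\gamma$ inside $\mathcal{B}$ with both hypotheses of Lemma \ref{r1} satisfied, yielding the desired contradiction. Equivalently (and this is the route I would actually carry out in detail), a Dubins-style reachability bound applied at the maximum point $(x_M, f(x_M))$ with horizontal tangent gives $(q - x_M)^2 + (1 - f(x_M))^2 \ge 1$ together with its mirror on the left, from which $(1 - f(x_M))^2 \ge 1 - ((q-p)/2)^2 = c^2$; this leaves only $f(x_M) \le 1-c$ (directly contradicting ``above $C$'') or $f(x_M) \ge 1 + c$, the latter excluded because an $x$-monotone descent from height $\ge 1+c$ requires a right quarter-arc and then a left quarter-arc, forcing horizontal travel exceeding $2$ on each side, contrary to $q - p < 2$.

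Finally, the cross section follows from parallel tangents by the standard chord-length estimate: if $\gamma'(t_1) = -\gamma'(t_2)$ with $t_1 < t_2$, the tangent rotates by $\pi$ between them, the curvature bound $|\gamma''| \le 1$ gives arclength at least $\pi$ on $[t_1,t_2]$, and comparison with a unit semicircle yields $\|\gamma(t_1)-\gamma(t_2)\| \ge 2$, matching Definition \ref{defncross}. The main obstacle in this plan is justifying the confinement of the rotated $\gamma$ to $\mathcal{B}$ (or, in the alternative route, verifying the Dubins reachability bound while allowing the monotone graph to dip below the $x$-axis between its endpoints); once that technical step is in place, both Lemma \ref{r1} and the two-quarter descent calculation rule out a bump profile exceeding the top of $C$.
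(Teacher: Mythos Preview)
The paper does not prove Corollary \ref{crossect}; it is explicitly stated without proof with a pointer to \cite{thesisayala}, so there is no argument here to compare yours against. That said, your outline contains a genuine gap that undermines both the primary route and your alternative.

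The faulty step is the inference ``since the open half-circle of tangent values contains $(1,0)$ it misses $(-1,0)$; hence $\gamma_x'(t) > 0$ throughout $I$.'' From $\langle \gamma'(t), u\rangle > 0$ with $u_x > 0$ you obtain only $\gamma'(t) \neq (-1,0)$; you do \emph{not} obtain $\gamma_x'(t) > 0$. If $u$ is close to $(0,1)$ with a small positive first component, the half-circle $\{v:\langle v,u\rangle > 0\}$ contains $(1,0)$ yet also contains vectors with negative first coordinate (anything just past $(0,1)$). Equivalently, the tangent image is a closed arc of length $<\pi$ containing $(1,0)$, but such an arc can extend beyond $(0,1)$ into the left half of $S^1$. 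So you cannot conclude that $\gamma$ is a graph over the $x$-axis.

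Both of your routes rely on this. In the Lemma \ref{r1} route, confining the rotated path to $\mathcal{B}$ needs control of the transverse extent of $\gamma$, and you have monotonicity only in the direction $u$, not in $x$; after your rotation about the center of $C$ the endpoints no longer sit on the axis of the band, and you have not shown the path stays in the strip of width $2$. In the Dubins route, your inequality $(1-y_M)^2 \ge 1 - ((q-p)/2)^2$ combines $(q - x_M)^2 + (1-y_M)^2 \ge 1$ with its mirror via $\min(|q-x_M|,|p-x_M|) \le (q-p)/2$; that last step needs $x_M$ to lie between $p$ and $q$, which is exactly the $x$-monotonicity you have not established. Your ``two-quarter descent needs horizontal travel $\ge 2$'' argument likewise presumes $x$-monotonicity of the descent.

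A smaller point: your final claim that parallel tangents $\gamma'(t_1) = -\gamma'(t_2)$ force $\|\gamma(t_1)-\gamma(t_2)\| \ge 2$ ``by comparison with a unit semicircle'' is a Schur-type statement that requires convexity or monotone turning between $t_1$ and $t_2$; for a general bounded-curvature arc with net turning an odd multiple of $\pi$ you should say why some pair of parallel-tangent points realizes chord $\ge 2$.
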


\begin{figure}[h]
	\centering
	\includegraphics[width=.7\textwidth,angle=0]{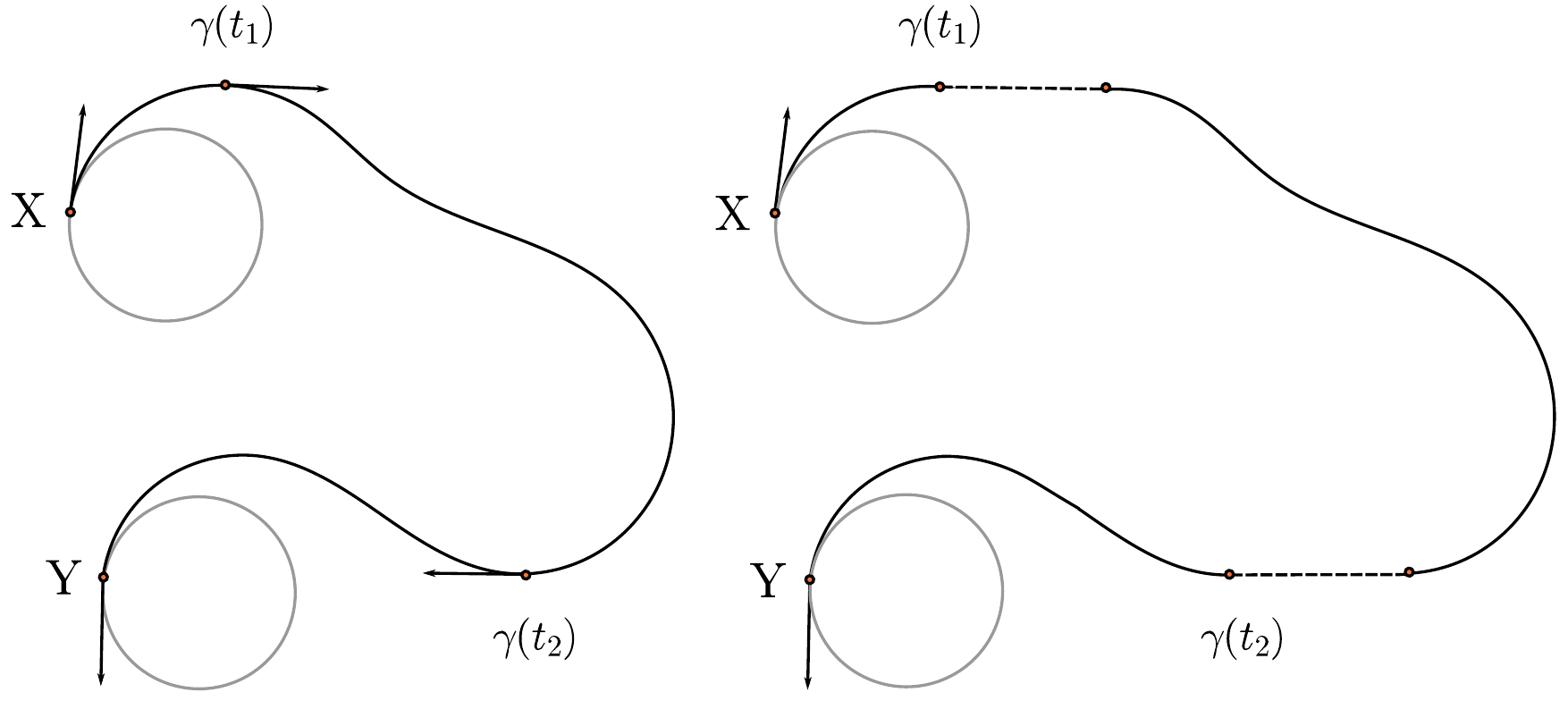}
\caption{Left: An example of a path with parallel tangents. Right:  A path bounded-homotopic to the path at the left.}
 \label{figpartan}
\end{figure}

Next result gives conditions for the existence of parallel tangents.

\begin{proposition}\label{partan} (See \cite{papere}). Bounded curvature paths having parallel tangents are free paths (see Fig. \ref{figpartan}).
\end{proposition}

Dubins in \cite{dubins 2} proved that a bounded curvature path with endpoint condition $\mbox{\sc x,y}\in T\mathbb R^2$ corresponding to an arc of a unit radius circle of length less than $\frac{\pi }{2}$ is an isolated point in $\Gamma(\mbox{\sc x,y})$.  Next we characterize the isolated points in  $\Gamma(\mbox{\sc x,y})$ for all $\mbox{\sc x,y}\in T\mathbb R^2$ satisfying condition D. 
In particular, we prove that bounded curvature paths of length zero are isolated points.


\begin{theorem}\label{c} (Characterization of isolated points). 
\end{theorem}

\begin{itemize}
\item A bounded curvature path from $\mbox{\sc x}\in T\mathbb R^2$ to itself consisting of a single point is an isolated point in $\Gamma(\mbox{\sc x,x})$.
\item A bounded curvature path satisfying $\mbox{\sc x,y}\in T\mathbb R^2$ consisting of a single arc of a unit radius circle of length less than $\pi $ is an isolated point in $\Gamma(\mbox{\sc x,y})$.
\vspace{.2cm}
 \item A bounded curvature path satisfying $\mbox{\sc x,y}\in T\mathbb R^2$ consisting of a concatenation of two arcs of unit radius circles each of length less than $\pi $ is an isolated point in $\Gamma(\mbox{\sc x,y})$.
 \end{itemize}

 \begin{proof}  Let $\gamma \in \Gamma(\mbox{\sc x,x})$ be a loop arbitrarily small. Consider a coordinate system so that $x\in \mathbb R^2$ lies in a radius $r$ circle $C$ while $\gamma$ lying above $C$. By applying Lemma \ref{r1} we conclude that $\gamma$ cannot exist. 
 
 For the second statement. Let $\gamma \in \Gamma(\mbox{\sc x,y})$ be an arc of a unit radius circle of length $\ell<\pi$. Suppose there exists a bounded curvature homotopy ${\mathcal H}_t:[0,1]\rightarrow \Gamma(\mbox{\sc x,y})$ between $\gamma$ and some path $\delta \in \Gamma(\mbox{\sc x,y})$ (different from $\gamma$) such that $\mathcal H_t(0)=\gamma$ and $\mathcal H_t(1)=\delta$. By the continuity of $\mathcal H_t$, there exists $\epsilon>0$ and $\sigma \in \Gamma(\mbox{\sc x,y})$ (different from $\gamma$) such that $\mathcal H_t(\epsilon)=\sigma$ and with the image of $\sigma$ being arbitrarily close to $\gamma$. By applying Lemma \ref{r1} we conclude that $\sigma$ cannot exist. Since $\epsilon>0$ can be chosen to be arbitrarily small, we conclude that $\gamma$ is an isolated point in ${\Gamma}(\mbox{\sc x,y})$.

For the third statement. Let $\gamma \in \Gamma (\mbox{\sc x,y})$ be a concatenation of two arcs of unit radius circles each of length less than $\pi $ say $C_1$ and $C_2$. Suppose there is a bounded curvature homotopy ${\mathcal H}_t:[0,1]\rightarrow \Gamma(\mbox{\sc x,y})$ such that ${\mathcal H}_t(0)=\gamma$ and ${\mathcal H}_t(1)=\delta$ with $\gamma \neq \delta$. Since homotopies are continuous maps we can choose a sufficiently small $\epsilon>0$ such that ${\mathcal H}_t(\epsilon)=\eta$ with the property that $\eta(t')$, for some $t'\in I$, intersects $\gamma$ and is arbitrarily close to the unique intersection (inflection) point between $C_1$ and $C_2$. Since the length of $C_1$ and $C_2$ each is less than $\pi $, by applying Lemma \ref{r1} in between $x$ and $\eta(t')$, or in between $\eta(t')$ and $y$ the result follows. 
\end{proof}

\begin{theorem} \label{singular} A path with a self intersection is free. A free path is bounded-homotopic to a path with a self intersection.

\end{theorem}
\begin{proof} Consider a bounded curvature path having a self intersection at $\gamma(t_1)=\gamma(t_2)$ with $t_1<t_2$. Consider $\gamma(t_1+\delta)$ and $\gamma(t_2-\delta)$ for sufficiently small $\delta>0$. Consider a unit disc containing in its boundary $\gamma(t_1+\delta)$ and $\gamma(t_2-\delta)$. By Corollary \ref{crossect} $\gamma$ must contain a pair of parallel tangents and by Proposition \ref{partan} $\gamma$ is a free path. It is easy to see that a free path is bounded-homotopic to a path with a self intersection by applying the disk sliding procedure in Remark \ref{regdefn} in one of the parallel lines until the deformed piece of the path intersects the other parallel. We leave the detail to the reader.
\end{proof}

\section{Bounded Curvature Paths in $\Omega$}


\begin{definition} Suppose $\gamma: I \rightarrow {\mathbb R}^2$ satisfies condition D. Let,
\begin{itemize}
\item $\Delta(\Omega)\subset \Gamma(\mbox{\sc x,y})$ be the space of embedded bounded curvature paths in $\Omega\subset \mathbb R^2$. 
\item $\Delta'(\Omega)\subset \Gamma(\mbox{\sc x,y})$ be the space of paths bounded-homotopic to paths not in $\Omega\subset \mathbb R^2$.  
\end{itemize}
\end{definition}



We characterize the elements in $\Delta({\Omega})$ by understanding how they intersect  $\partial \Omega$. Our main result, Theorem \ref{mainresultp1} shows that $\Delta(\Omega)\cap \Delta'(\Omega)\neq \emptyset$.

\begin{definition}\label{btpt} Let $\gamma:\mbox{\rm int}(I) \to \Omega\subset \mathbb R^2$ such that its image is not contained in $\partial \Omega$ for every subinterval of $I$. A point $\gamma(t)\in\partial\Omega$ is called {\it boundary tangent point}, see Fig. \ref{figint}. 
\end{definition}

\begin{figure}[h]
	\centering
	\includegraphics[width=1\textwidth,angle=0]{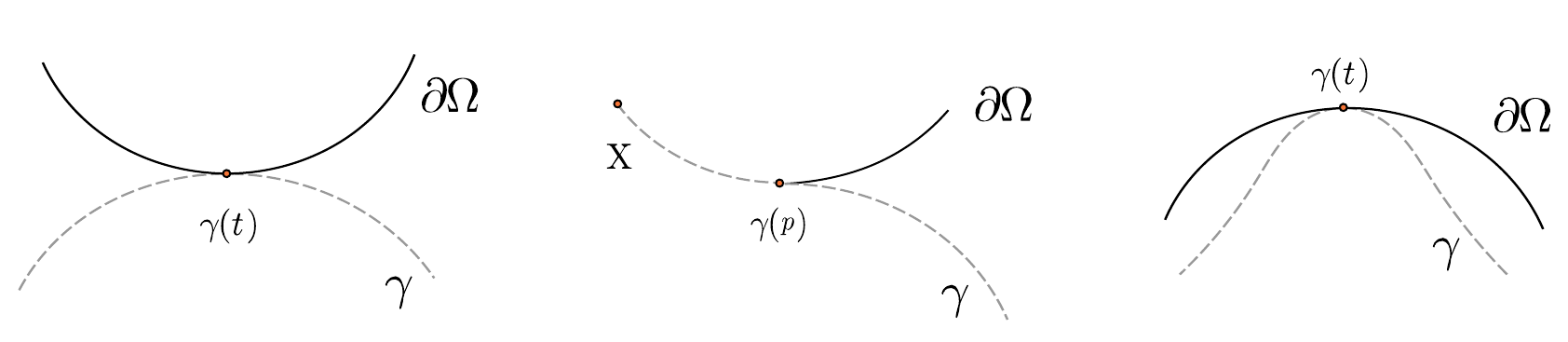}
\caption{The points $\gamma(t)$ at the left and right illustrations are boundary tangent points while $\gamma(p)$ at the center is not a boundary tangent point.}
\label{figint}
\end{figure}

Our first goal is to establish whether embedded bounded curvature paths in $\Omega\subset\mathbb R^2$ have {\it boundary tangent points}. As a first step we prove the following results. Their proofs strongly depend on the bound on curvature and the way these paths intersect $\partial \Omega$.

\begin{lemma}\label{ntp} If a bounded curvature path $\gamma:[0,s] \rightarrow {\mathbb R}^2$ lies in a unit radius disk $\mathcal D$, then either $\gamma$ is entirely in $\partial \mathcal  D$, or the interior of $\gamma$ is disjoint from $\partial {\mathcal D}$.
\end{lemma}

\begin{proof} Suppose there is a bounded curvature path $\gamma:I\to {\mathcal D}$ not entirely in $\partial {\mathcal D}$, such that $\gamma(p)\in \partial {\mathcal D}$, for some $p \in \mbox{int}(I)$. Since $\gamma$ is piecewise $C^2$, there are two cases. Firstly, if  $\gamma''$ is defined (and continuous) in a neighborhood of $p$. Choose coordinates so that $\gamma(p)$ is the origin and $\gamma'(p)$ is the positive $x$-axis with $\mathcal D$ in the upper half plane. Then locally $\partial {\mathcal D}$ has the form $(t,1-\sqrt{1-t^2})$. Write $\gamma$ in the form $(t,f(t))$. Then the curvature bound becomes $\frac{f''}{({1+f'^2})^{\frac{3}{2}}}\leq 1$ using the well known formula for the curvature of a graph. Since $f(0)=0$ and $f'(0)=0$, integrating the differential inequality gives $f(t)\leq 1-\sqrt{1-t^2}$.
But this implies $\gamma$ is disjoint from the interior of $\mathcal D$ near $p$. We conclude $\gamma$ coincides with $\partial {\mathcal D}$ near $p$.
In the second case $\gamma''$ is not defined at $p$. If $\gamma(p)$ is an inflection point then $\gamma$ crosses $\partial {\mathcal D}$ at $\gamma(p)$, contrary to the assumption that $\gamma$ lies entirely in ${\mathcal D}$. If $\gamma(p)$ is not an inflection point we use a similar argument as the one when $\gamma''$ is defined (and continuous) in a neighborhood of $p$ to conclude the proof.
\end{proof}

\begin{corollary}\label{nointcirc} A bounded curvature path having its final position in the interior of either of the disks with boundary $\mbox{\sc C}_l(\mbox{\sc x})$ or $\mbox{\sc C}_r(\mbox{\sc x})$ is free.
\end{corollary}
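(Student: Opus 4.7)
The plan is to show that any such $\gamma$ admits parallel tangents, after which Proposition \ref{partan} yields that $\gamma$ is a free path. Assume without loss of generality that $y$ lies in the interior of $\mathbb D_l$, the unit disk bounded by $\mbox{\sc C}_l(\mbox{\sc x})$.

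First I would rule out the possibility that $\gamma$ is entirely contained in $\mathbb D_l$. Since $X$ is tangent to $\mbox{\sc C}_l(\mbox{\sc x})$ at $x\in\partial\mathbb D_l$, consider the function $f(t)=||\gamma(t)-c_l(\mbox{\sc x})||^2-1$. A direct computation gives $f(0)=f'(0)=0$ and, using $||\gamma''||\leq 1$ together with $||\gamma(t)-c_l(\mbox{\sc x})||\leq 1$ on any subinterval where $\gamma$ stays in $\mathbb D_l$, one obtains $f''\geq 0$ there; so $f$ is convex on such a subinterval. Combined with $f\leq 0$ on $\mathbb D_l$, this forces $\gamma$ to lie on $\partial\mathbb D_l$ locally near $t=0$. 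Lemma \ref{ntp} then forces $\gamma\subset\partial\mathbb D_l$ globally, contradicting $y\in\mbox{int}(\mathbb D_l)$. Hence $\gamma$ must exit $\mathbb D_l$ at some parameter in $(0,s)$ and, since $\gamma(s)=y$ lies in $\mbox{int}(\mathbb D_l)$, must re-enter.

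Next I would split on the initial traversal of $\mbox{\sc C}_l(\mbox{\sc x})$. If $\gamma$ contains an arc of $\mbox{\sc C}_l(\mbox{\sc x})$ of length at least $\pi$, Corollary \ref{partanarc} immediately gives that $\gamma$ is free. Otherwise, let $t^{\ast}$ be the last parameter at which $\gamma(t^{\ast})\in\partial\mathbb D_l$; by continuity and connectedness, $\gamma|_{(t^{\ast},s]}\subset\mbox{int}(\mathbb D_l)$. The aim is then to show that the tangent of $\gamma$ must rotate by at least $\pi$ between its initial direction $X$ and the re-entry direction $\gamma'(t^{\ast})$, equivalently during the excursion of $\gamma$ outside $\mathbb D_l$; this yields parallel tangents on $\gamma$ and Proposition \ref{partan} concludes.

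The main obstacle is making this winding step rigorous, particularly since $\gamma$ may oscillate in and out of $\mathbb D_l$ several times. I would address this by applying Corollary \ref{crossect} to a carefully chosen excursion of $\gamma$ outside $\mathbb D_l$, after a coordinate rotation placing its exit and re-entry points on the $x$-axis with $\mbox{\sc C}_l(\mbox{\sc x})$ playing the role of the unit circle $C$ of that corollary. This should yield parallel tangents on the excursion, hence on $\gamma$, completing the argument.
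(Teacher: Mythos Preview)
Your approach is correct but considerably more elaborate than the paper's. The paper's proof is essentially one line: since $x\in\partial\mathbb D_l$ and $y\in\mbox{int}(\mathbb D_l)$, one has $d(x,y)<2$; placing $x$ and $y$ on a new $x$-axis and invoking Corollary~\ref{crossect} on the \emph{entire} path $\gamma$ (with $C$ the unit circle through $x$ and $y$ whose center lies on the negative $y$-axis) yields parallel tangents, and Proposition~\ref{partan} finishes.

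By contrast, you first argue that $\gamma$ must leave $\mathbb D_l$, then split on long boundary arcs, and finally apply Corollary~\ref{crossect} to an excursion of $\gamma$ outside $\mathbb D_l$, taking $C=\mbox{\sc C}_l(\mbox{\sc x})$ itself. This works, and it has the small virtue that the hypothesis ``some point of $\gamma$ lies above $C$'' in Corollary~\ref{crossect} is automatic: the excursion is, by construction, outside $\mathbb D_l$. In the paper's route one must tacitly note (via Lemma~\ref{ntp}) that $\gamma$ cannot lie entirely in the disk bounded by the auxiliary circle $C$, a point the paper leaves implicit. On the other hand, your detour through the function $f(t)=\|\gamma(t)-c_l(\mbox{\sc x})\|^2-1$ is redundant: since $\gamma(0)=x\in\partial\mathbb D_l$, Lemma~\ref{ntp} already rules out $\gamma\subset\mathbb D_l$ in one stroke. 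The case split on arcs of length $\geq\pi$ is likewise unnecessary, and the intermediate ``aim'' of showing the tangent rotates by at least $\pi$ between $X$ and $\gamma'(t^{\ast})$ is never actually used, since parallel tangents on the excursion already suffice.

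In short: both arguments hinge on Corollary~\ref{crossect} and Proposition~\ref{partan}; yours applies the former to a subarc with $C=\mbox{\sc C}_l(\mbox{\sc x})$, while the paper applies it to the whole path using only the observation $d(x,y)<2$.
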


\begin{proof} Since the distance between the initial and final points $x$ and $y$ satisfies $d(x,y)<2$, by applying Corollary \ref{crossect} the path admits parallel tangents and by Proposition \ref{partan} we conclude that the path is free.
\end{proof}

\begin{theorem} \label{firstreg} \end{theorem}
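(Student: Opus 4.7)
The plan is to combine the local rigidity for paths tangent to a unit circle (Lemma \ref{ntp}) with the geometry of $\mathcal{R}_1$ to conclude that any embedded bounded curvature path $\gamma$ starting at $x$ and contained in $\mathcal{R}_1$ must coincide (as a set) with a subarc of one of the adjacent arcs $\Theta_1$ or $\Theta_2$. The region $\mathcal{R}_1$ is bounded on one side by $\Theta_1 \cup \Theta_2$ — subarcs of the two adjacent unit circles emanating from $x$ — and on the other by the transversal line $L_1$, and this is precisely the setting in which Lemma \ref{ntp} has its most forcing effect.

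First I would observe that both adjacent circles $\mbox{\sc C}_l(\mbox{\sc x})$ and $\mbox{\sc C}_r(\mbox{\sc x})$ are tangent to $X$ at the common point $x$. A Taylor-expansion argument identical to the one used inside Lemma \ref{ntp} shows that, because $\gamma'(0)=X$ and $\|\gamma''\|\leq 1$, the path $\gamma$ cannot enter the open interior of either closed unit disk $\mathbb{D}_l$ or $\mathbb{D}_r$ immediately after leaving $x$. Since $\mathcal{R}_1$ already lies outside the interiors of these two disks (the disks meet $\mathcal{R}_1$ only along the boundary arcs $\Theta_1$ and $\Theta_2$), the curvature bound combined with $\gamma \subset \mathcal{R}_1$ confines $\gamma$ to the thin triangular pocket whose relevant boundary pieces are $\Theta_1$, $\Theta_2$, and the segment $L_1 \cap \overline{\mathcal{R}_1}$.

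Next I would apply Lemma \ref{ntp} directly to $\mathbb{D}_l$ and $\mathbb{D}_r$: whenever $\gamma$ meets $\Theta_i$ at an interior parameter of $I$, it must coincide with $\Theta_i$ on a neighborhood of that parameter. The hypothesis $\gamma \subset \mathcal{R}_1$ forbids $\gamma$ from crossing $L_1$. A connectedness argument on the subsets of $I$ on which $\gamma$ lies in $\Theta_1$ (respectively $\Theta_2$) — both are closed by continuity and open by Lemma \ref{ntp} — then forces the image of $\gamma$ to be a subarc of a single one of these two arcs.

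The main obstacle is twofold: handling the piecewise $C^2$ corners of $\gamma$, where Lemma \ref{ntp}'s argument must be reapplied to the two one-sided Taylor expansions separately (exactly as in the second case of that proof); and excluding the scenario in which $\gamma$ detaches from $\Theta_1$, traverses the interior of the pocket, and re-attaches to $\Theta_2$ (or vice versa). For the latter I would argue that any such excursion forces the osculating circle of $\gamma$ at a detachment point to push $\gamma$ into $\operatorname{int} \mathbb{D}_r$ or $\operatorname{int} \mathbb{D}_l$ before it can reach the opposite boundary, contradicting either Lemma \ref{ntp} or the curvature bound, given how narrow the pocket is when $d(c_l(\mbox{\sc x}), c_l(\mbox{\sc y})) < 4$ and $d(c_r(\mbox{\sc x}), c_r(\mbox{\sc y})) < 4$ under proximity condition {\sc D}.
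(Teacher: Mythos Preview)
Your approach has a genuine gap in its central step. Lemma~\ref{ntp} constrains a bounded curvature path lying \emph{inside} a closed unit disk: from the inside, tangential contact with the boundary circle forces local coincidence. But $\mathcal{R}_1$, and indeed all of $\Omega$, lies in the forward region of $\mbox{\sc x}$ (Remark~\ref{newcond}), which by definition sits in the \emph{complement} of the interiors of $\mathbb{D}_l$ and $\mathbb{D}_r$. So a path in $\mathcal{R}_1$ meeting $\Theta_i\subset\partial\mathbb{D}_l$ (or $\partial\mathbb{D}_r$) does so from the \emph{outside} of the disk, and from that side the curvature bound yields no rigidity whatsoever: a straight segment tangent to the unit circle, or an arc of any circle of radius at least $1$ externally tangent to it, touches at a single point and departs. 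Your assertion ``whenever $\gamma$ meets $\Theta_i$ at an interior parameter it must coincide with $\Theta_i$ on a neighbourhood'' is therefore false, and the open--closed connectedness argument built on it collapses. Indeed the stronger target you set --- that every embedded bounded curvature path contained in $\mathcal{R}_1$ must trace a subarc of $\Theta_1$ or $\Theta_2$ --- is already refuted by the straight segment from $x$ in the direction $X$, which lies in the interior of $\mathcal{R}_1$ until it meets $L_1$ and touches $\partial\Omega$ only at $x$.

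The paper's argument uses a different mechanism entirely: Lemma~\ref{r1}, the band lemma. Assuming a boundary tangent point $\gamma(t)\in\partial\mathcal{R}_1$ exists, one rotates coordinates so that the segment from $x$ to $\gamma(t)$ becomes the new $x$-axis with midpoint at the origin and $\mathcal{R}_1$ in the upper half plane, then checks that $\mathcal{R}_1\subset\mathcal{B}$ in these coordinates. The subpath of $\gamma$ from $x$ to $\gamma(t)$ now contradicts Lemma~\ref{r1}. The obstruction is global --- $\mathcal{R}_1$ is too narrow relative to the minimum turning radius to accommodate such an excursion --- rather than a local rigidity of tangency along $\Theta_i$.
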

\begin{itemize}
\item A bounded curvature path in $\Omega \subset \mathbb R^2$ does not have a boundary tangent point in ${\mathcal R}_1$ before first leaving to ${\mathcal R}_2$.
\item A bounded curvature path in $\Omega \subset \mathbb R^2$ does not have a boundary tangent point in ${\mathcal R}_3$ after its last exit from ${\mathcal R}_2$.
\end{itemize}

\begin{proof} Consider the first statement. Let $\gamma$ be a bounded curvature path in $\Omega$. Suppose there exists $t\in \mbox{int}(I)$ such that:
 $$\mbox{Im}(\gamma)\cap \partial \Omega= \{\gamma(t)\}$$
 is a boundary tangent point in ${\mathcal R}_1$ before $\gamma$ reaches ${\mathcal R}_2$.  Change coordinates so that the line through $x$ and $\gamma(t)$ is the new $x$-axis and a new origin $o$ is the midpoint between $x$ and $\gamma(t)$ with the positive $y$-axis passing through ${\mathcal R}_1$. Define ${\mathcal B}$ as in Lemma \ref{r1}. It is easy to check that ${\mathcal R}_1\subset {\mathcal B}$ and so the result follows from Lemma \ref{r1}. An analogous method proves the second statement.
\end{proof}

The first statement in Theorem \ref{firstreg} establishes that no bounded curvature path in $\Omega\subset\mathbb R^2$ have a boundary tangent point in $\partial {\mathcal R}_1\subset \partial \Omega$ before the path enters ${\mathcal R}_2$. Symmetrically, no bounded curvature path in $\Omega$ have a boundary tangent point in $\partial {\mathcal R}_3\subset \partial \Omega$ after leaving ${\mathcal R}_2$ for the final time.

\begin{theorem}\label{r2} Bounded curvature paths in $\Omega \subset \mathbb R^2$ do not have boundary tangent points in ${\mathcal R}_2$.
\end{theorem}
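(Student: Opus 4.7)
The plan is to argue by contradiction, reducing to an application of Lemma \ref{ntp}. Suppose some $\gamma\in\Gamma(\mbox{\sc x,y})$ with image in $\Omega$ has a boundary tangent point $\gamma(t)\in\mathcal{R}_2$. Since $L_1$ and $L_2$ lie in the interior of $\Omega$, the set $\partial\Omega\cap\mathcal{R}_2$ consists (generically) of the two middle arcs of $\partial\mathcal{R}_2$; after treating one of them the other follows by symmetry. Say $\gamma(t)$ lies on the upper middle arc, and let $C_1$ be the unit circle containing it, $\mathbb{D}_1$ its closed disk, and $c_1$ its center.

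The first key step is to observe that near $\gamma(t)$ the region $\Omega$ is trapped inside $\mathbb{D}_1$. By the arc condition of Remark \ref{centercircle}, $c_1$ lies strictly below the segment from $c_l(\mbox{\sc x})$ to $c_l(\mbox{\sc y})$; in particular, $c_1$ is on the same side of $C_1$ as the interior of $\Omega$ near the upper arc. Hence a sufficiently small open neighborhood $U$ of $\gamma(t)$ satisfies $\Omega\cap U\subset\mathbb{D}_1$. By continuity of $\gamma$, there is $\varepsilon>0$ with $\gamma\bigl([t-\varepsilon,t+\varepsilon]\bigr)\subset U$, so the restriction $\gamma|_{[t-\varepsilon,t+\varepsilon]}$ is a bounded curvature path in $\mathbb{D}_1$ meeting $\partial\mathbb{D}_1=C_1$ at the interior parameter $t$.

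Now Lemma \ref{ntp} applies directly to this restricted path and forces it to coincide with $C_1$ in a neighborhood of $t$, i.e., with the upper middle arc of $\partial\Omega$ on a subinterval of $I$. But Definition \ref{btpt} explicitly excludes the situation in which the image of $\gamma$ coincides with $\partial\Omega$ on a subinterval, contradicting the choice of $\gamma(t)$ as a boundary tangent point.

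A minor residual case is that in which $\theta_2$ or $\theta_4$ exceeds $\pi/2$, so that the modified $L_1$ or $L_2$ transfers a piece of an adjacent arc $\Theta_k$ into $\partial\mathcal{R}_2$; this is handled by running the identical argument with $C_1$ replaced by the corresponding adjacent circle, since locally $\Omega$ again lies on the interior side of that circle. The main obstacle, and the only genuinely geometric point, is verifying the local inclusion $\Omega\cap U\subset\mathbb{D}_1$ uniformly across the possible shapes of $\Omega$; once that is in hand the bounded-curvature rigidity of Lemma \ref{ntp} supplies the contradiction automatically.
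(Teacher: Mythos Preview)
Your proof is correct and follows essentially the same approach as the paper: both reduce the statement to Lemma~\ref{ntp} by observing that near a point of the middle arcs the region $\Omega$ lies inside the corresponding closed unit disk, so a bounded curvature path in $\Omega$ touching that arc would have to coincide with it on a subinterval, contradicting Definition~\ref{btpt}. Your write-up is considerably more explicit than the paper's one-line proof, in particular spelling out the local inclusion $\Omega\cap U\subset\mathbb{D}_1$ via the arc condition and treating the residual case $\theta_2>\frac{\pi}{2}$ or $\theta_4>\frac{\pi}{2}$, but the underlying idea is the same.
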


\begin{proof} By considering $\partial {\mathcal R}_2$ as $\partial {\mathcal D}$ in Lemma \ref{ntp} the result follows.
\end{proof}

 \section{The S-Lemma} \label{sectsthm}



The S-Lemma gives a method for characterizing paths in $\Omega\subset \mathbb R^2$ via the turning map and its extremals. Of special relevance will be the existence of {\it maximal inflection points} and their relation with curvature and the diameter of $\Omega$.  

Consider the exponential map $\exp: {\mathbb R} \rightarrow {\mathbb S}^1$.


\begin{definition} For a path $\gamma:I\to{\mathbb R}^2$. The {\it turning map} $\tau$ is defined in the following diagram,
\[ \xymatrix{ I  \ar[d]_{\tau}   \ar@{>}[dr]^{w} &  \\
                     {\mathbb R} \ar[r]_{\exp}  & {\mathbb S}^1} \]

The map $w:I \rightarrow {\mathbb S}^1$ is called the {\it direction map}, and gives the derivative $\gamma'(t)$ of the path $\gamma$ at $t\in I$. The turning map $\tau:I\rightarrow {\mathbb R}$ gives the turning angle the derivative vector makes at $t\in I$ with respect to $\exp(0)$ i.e., the turning angle $\gamma'(t)$ makes with respect to the $x$-axis. \end{definition}

\begin{definition}$\gamma \in \Gamma(\mbox{\sc x,y})$ have a {\it negative direction} if there exists $t\in I$ such that $\langle X, \gamma'(t)\rangle<0$. 
\end{definition}

In order to ensure conditions for the existence of a negative direction we state the following intuitive result whose proof is left to the reader.

\begin{lemma} \label{mono} A $C^1$ path $\gamma(t)=(x(t),y(t))$ with $x:I\rightarrow {\mathbb R}$ not being a monotone function have a negative direction.
\end{lemma}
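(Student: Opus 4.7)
The plan is short because the statement reduces, via the coordinate convention of Remark \ref{coord}, to an elementary mean value theorem argument. Under that convention $X$ is a positive multiple of $(\partial/\partial x)$, so for any $t\in I$ one has
\[
\langle X,\gamma'(t)\rangle \;=\; \|X\|\, x'(t),
\]
and the condition ``$\gamma$ admits a negative direction'' is equivalent to the existence of some $t\in I$ with $x'(t)<0$. The goal therefore becomes: if $x\colon I\to\mathbb{R}$ is $C^1$ and not monotone, then $x'$ attains a strictly negative value somewhere in $I$.

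First I would fix terminology: by ``monotone'' we mean weakly monotone, i.e.\ either non-decreasing or non-increasing on $I$. If $x$ is not monotone in this sense, then in particular $x$ is not non-decreasing, so there exist $s<t$ in $I$ with $x(s)>x(t)$. Since $\gamma$ is $C^1$, so is $x$, and the mean value theorem yields some $\xi\in(s,t)$ with
\[
x'(\xi) \;=\; \frac{x(t)-x(s)}{t-s} \;<\; 0.
\]
At this $\xi$ we obtain $\langle X,\gamma'(\xi)\rangle = \|X\|\, x'(\xi) < 0$, which is exactly the existence of a negative direction.

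The only subtlety worth flagging is the interpretation of ``monotone''. If instead one wished to allow strictly monotone as the sole notion, a constant function would already be a counterexample (it is ``not monotone'' in that sense yet has $x'\equiv 0$), and the lemma as phrased would fail. Under the standard weakly monotone convention, however, the argument is essentially the one-line MVT step above, which is why the authors judged the statement ``intuitive'' and left the details to the reader. The main (and only) obstacle in writing a clean proof is thus making the definitional convention explicit and then invoking the mean value theorem on the first-coordinate function; no use of the curvature bound or of the bounded curvature path structure beyond $C^1$ smoothness is required.
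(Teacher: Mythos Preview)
Your argument is correct: under the coordinate convention of Remark~\ref{coord} the condition $\langle X,\gamma'(t)\rangle<0$ reduces to $x'(t)<0$, and the mean value theorem applied to a pair $s<t$ with $x(s)>x(t)$ yields exactly such a point. The paper does not supply its own proof of this lemma (it is explicitly left to the reader as an ``intuitive result''), so your one-line MVT justification is precisely the elementary verification the authors have in mind; your remark on the weak-versus-strict monotonicity convention is also the right caveat to make explicit.
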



Consequently, a path having only non-negative directed tangent vectors must have a non-decreasing coordinate function $x:I\rightarrow {\mathbb R}$. Therefore such a path is confined to never travel backwards when projected to the $x$-axis.

\begin{definition}  A boundary tangent point $\gamma(t) \in \partial {\mathcal R}_1$, $t\in I$ is called a returning point if there exists $r\in I$ such that $r<t$ and $\gamma(r)\in {\mathcal R}_2$. A boundary tangent point $\gamma(t) \in \partial{\mathcal R}_3$, $t\in I$ is called a returning point if there exists $r\in I$ such that $t<r$ and $\gamma(r)\in {\mathcal R}_2$.
\end{definition}

Bounded curvature paths in $\Omega\subset \mathbb R^2$ may or may not have a returning point. This depends on the shape of $\Omega$ and therefore on the endpoint in $T\mathbb R^2$. In particular, if $\Omega$ contains a unit radius disk $\mathcal D$ such that $\partial{\mathcal R}_1 \cap {\mathcal D}$ (or $\partial {\mathcal R}_3 \cap {\mathcal D}$) is non-empty then paths with self intersections having returning points can be constructed. However, these paths are not embedded and therefore free by virtue of Theorem \ref{singular}.

\begin{proposition}\label{rpt} A bounded curvature path in $\Omega \subset \mathbb R^2$ having a returning point in $\partial {\mathcal R}_1$ (or $\partial {\mathcal R}_3$) admit a negative direction.
 \end{proposition}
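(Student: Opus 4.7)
The plan is to reduce the proposition to Lemma~\ref{mono} by exhibiting non-monotone behavior of the first coordinate of $\gamma$ in the frame of Remark~\ref{coord}. Consider first a returning point $\gamma(t_*) \in \partial {\mathcal R}_1 \cap \partial \Omega$ together with some $r \in (0, t_*)$ such that $\gamma(r) \in {\mathcal R}_2$. Without loss of generality $\gamma(t_*) \in \Theta_2 \subset \mbox{\sc C}_l(\mbox{\sc x})$, the case $\Theta_1 \subset \mbox{\sc C}_r(\mbox{\sc x})$ being analogous by reflection across the axis generated by $X$. Parameterizing $\mbox{\sc C}_l(\mbox{\sc x})$ as $\phi \mapsto (\sin \phi, 1 - \cos \phi)$, I write $\gamma(t_*) = (\sin \phi_*, 1 - \cos \phi_*)$ with $\phi_* \in (0, \theta_2]$, so that $x(\gamma(0)) = 0$ and $x(\gamma(t_*)) = \sin \phi_* \in (0, 1]$.

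The key geometric step is to show $x(\gamma(r)) > \sin \phi_*$. For this I would exploit the structure of $\Omega$ under proximity condition D. The line $L_1$, or its tangent-line substitute when $\theta_2 \geq \pi/2$, separates the arc $\Theta_2 \subset \overline{{\mathcal R}_1}$ from the interior of ${\mathcal R}_2$. The arc condition of Remark~\ref{centercircle} pins down on which side of the segments $[c_l(\mbox{\sc x}), c_l(\mbox{\sc y})]$ and $[c_r(\mbox{\sc x}), c_r(\mbox{\sc y})]$ the centers of the middle arcs of $\partial {\mathcal R}_2$ lie; combining this with the angle constraints of Lemma~\ref{pi/2} gives enough control on the orientation of $L_1$ to conclude that the $x$-coordinate along ${\mathcal R}_2$ strictly exceeds the $x$-coordinate along $\Theta_2$. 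Once this inequality is in hand, the chain $x(\gamma(0)) = 0 < \sin \phi_* = x(\gamma(t_*)) < x(\gamma(r))$ shows that $t \mapsto x(\gamma(t))$ is not monotone on $[0, t_*]$, and Lemma~\ref{mono} yields a parameter at which $\langle X, \gamma'\rangle < 0$, which is precisely the desired negative direction. The case of a returning point in $\partial {\mathcal R}_3$ is handled symmetrically: the definition gives $r > t_*$ with $\gamma(r) \in {\mathcal R}_2$, and the analogous geometric claim $x(\gamma(t_*)) > x(\gamma(r))$ obtained by the same argument applied to $L_2$ yields non-monotonicity of $t \mapsto x(\gamma(t))$ between the parameters $0$, $t_*$ and $r$.

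The main obstacle will be proving the geometric separation uniformly in the range of configurations allowed by condition D, particularly in the most asymmetric cases where the line $L_1$ can deviate substantially from the vertical. If the direct $x$-projection argument fails in such cases, I would instead project onto the outward normal to $L_1$ (pointing from ${\mathcal R}_1$ into ${\mathcal R}_2$) to obtain non-monotonicity of that projection, and then convert the resulting negative component in the normal direction back to a negative component in the $X$ direction by controlling the tilt of $L_1$ via Lemma~\ref{pi/2} and the positions of $i_1, i_2$, which by construction lie in opposite half-planes relative to the line $y = 0$.
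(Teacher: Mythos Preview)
Your overall strategy---reducing to Lemma~\ref{mono} by exhibiting non-monotonicity of the first coordinate in the frame of Remark~\ref{coord}---is precisely the paper's approach. The paper's proof is a single sentence: it asserts that the existence of a returning point implies non-monotonicity of the first component of $\gamma$ and then invokes Lemma~\ref{mono}. No details are given; the non-monotonicity is treated as geometrically evident from the fact that the path has gone out to ${\mathcal R}_2$ and come back to $\partial{\mathcal R}_1$.

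Where your proposal diverges is in the attempt to justify non-monotonicity through the explicit chain $x(\gamma(0)) < x(\gamma(t_*)) < x(\gamma(r))$, and this is where a genuine gap appears. The inequality $x(\gamma(r)) > \sin\phi_*$---equivalently, that every point of ${\mathcal R}_2$ has strictly larger first coordinate than every point of $\Theta_2$---is not true across the full range of configurations allowed by condition~D. When $\theta_1$ is small and $\theta_2$ is moderate (say near $\pi/4$), the inflection point $i_1 \approx (\sin\theta_1, 0)$ lies very close to the origin, so ${\mathcal R}_2$ contains points with first coordinate close to $\sin\theta_1$, which can be much smaller than $\sin\phi_*$ for $\phi_*$ near $\theta_2$. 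Lemma~\ref{pi/2} and the arc condition do not prevent this asymmetry. So the separating inequality you pose as the ``key geometric step'' can fail.

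Your fallback of projecting onto the outward normal $n$ to $L_1$ does produce non-monotonicity of $\langle n,\gamma(\cdot)\rangle$, hence a parameter with $\langle n,\gamma'\rangle<0$; but this is not the required $\langle X,\gamma'\rangle<0$. The conversion step you sketch---controlling the tilt of $L_1$ via Lemma~\ref{pi/2}---is not carried out, and the same asymmetric configurations that break the $x$-projection also let $L_1$ make a substantial angle with the vertical, so the tilt is not small enough for any immediate transfer. In short, your plan matches the paper's at the top level, but the extra rigor you aim for rests on an inequality that is false as stated, and the proposed rescue is incomplete. The paper simply does not attempt this level of detail.
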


\begin{proof} The existence of a returning point implies non-monotonicity of the first component of $\gamma$ and therefore Lemma \ref{mono} ensures the existence of a point $r\in I$ such that $\langle X, \gamma'(r)\rangle<0$.\end{proof}

\begin{definition}A {\it maximal inflection point} with respect to ${\mbox{\sc x}}\in T{\mathbb R}^2$ is a minimum value of the turning map $\tau: I\rightarrow {\mathbb R}$ (see Fig. \ref{figmaxinf}).
\end{definition}

It is easy to see that if $\gamma$ has a returning point and maximal inflection point at $\gamma(t)$, then $\langle X, \gamma'(t)\rangle<0$.

\begin{definition} Let $\gamma(t)$ be an inflection point, $t \in I$. We denote by $\mathcal I$ the affine line at $\gamma(t)$ spanned by the vector $\gamma'(t)$ and call it the {\it inflection tangent}. The perpendicular line to the inflection tangent at $\gamma(t)$ is denoted by $\mathcal N$ and is called the {\it inflection normal}.
\end{definition}

\begin{remark}\label{inflcoord}Let $\gamma(t)$ be an inflection point. Consider a coordinate system at $\gamma(t)$ with coordinate axes $\mathcal I$ and $\mathcal N$. Such a coordinate system partitions the plane into four quadrants denoted by {\sc I, II, III} and {\sc IV} as usual. We say that the quadrants {\sc I} and {\sc III} and the quadrants {\sc II} and {\sc IV} are opposite quadrants (see Fig. \ref{figsthm}).
\end{remark}

\begin{definition} Suppose that $\gamma$ crosses ${\mathcal N}$ before and after the inflection point $\gamma(t)$. Let $F_1$ be the last time $\gamma$ crosses $\mathcal N$ before reaching $\gamma(t)$ and let $F_2$ be the first time $\gamma$ crosses ${\mathcal N}$ after reaching $\gamma(t)$ (see Fig. \ref{figsthm}).
\end{definition}

\begin{lemma}\label{mipit} (See Fig. \ref{figmaxinf}). Let $\gamma$ be a bounded curvature path with maximal inflection point at $\gamma(t)$ for some $t\in I$.
\begin{itemize}
\item The path $\gamma$ between $F_1$ and $\gamma(t)$ does not cross the inflection tangent $\mathcal I$ in a point other than $\gamma(t)$.
\item The path $\gamma$ between $\gamma(t)$ and $F_2$ does not cross the inflection tangent $\mathcal I$ in a point other than $\gamma(t)$.
\end{itemize}
\end{lemma}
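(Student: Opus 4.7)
The plan is to argue by contradiction in coordinates adapted to the inflection point, extracting an integral identity for the turning map that must fail. First, place the origin at $\gamma(t)$ with $\gamma'(t)$ along the positive $x$-axis, so that $\mathcal I$ is the $x$-axis and $\mathcal N$ is the $y$-axis. Lift the turning map so that $\tau(t)=0$; since $\gamma(t)$ is a maximal inflection (a minimum of $\tau$), one gets $\tau(u)\geq 0$ for every $u\in I$, and the curvature bound yields $|\tau'|\leq 1$, hence $\tau(u)\leq |u-t|$. Because $\gamma'(t)=(1,0)$ and $F_1$ is the last crossing of $\mathcal N$ strictly before $t$, continuity forces $x(u)<0$ throughout $(F_1,t)$; moreover, near $t$ the formula $y(u)=-\int_u^t\sin\tau\,dv$ together with $\tau$ small and non-negative gives $y(u)\leq 0$, with strict inequality once one excludes the degenerate case in which $\tau$ vanishes identically in a one-sided neighbourhood of $t$.

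Next, assume for contradiction that some $s^*\in(F_1,t)$ satisfies $\gamma(s^*)\in\mathcal I$, and take $s^*$ to be the largest such parameter. Then $y<0$ on $(s^*,t)$ while $y(s^*)=y(t)=0$, so
\[
\int_{s^*}^{t}\sin\tau(u)\,du = y(t)-y(s^*) = 0.
\]
When $\tau\leq\pi$ throughout $[s^*,t]$, non-negativity of $\sin\tau$ forces $\sin\tau\equiv 0$; continuity and $\tau(t)=0$ then give $\tau\equiv 0$ on $[s^*,t]$, so $\gamma|_{[s^*,t]}$ is a horizontal segment of $\mathcal I$, contradicting $y<0$ on $(s^*,t)$. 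When $\tau$ exceeds $\pi$ somewhere on $[s^*,t]$, the curvature bound forces $t-s^*>\pi$; the endpoints of $\gamma|_{[s^*,t]}$ both lie on $\mathcal I$, and after reflecting this subpath across $\mathcal I$ one is in position to apply Corollary \ref{crossect}. The resulting parallel tangents inside $\gamma|_{[s^*,t]}$, translated back through the lift of $\tau$ and combined with the confinement $x\leq 0$ on $[F_1,t]$, force an additional crossing of $\mathcal N$ in the open interval $(F_1,t)$, contradicting the maximality of $F_1$. The second assertion follows from the symmetric argument on $[t,F_2]$, exchanging the roles of past and future.

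The main obstacle I expect is the regime where $\tau$ may exceed $\pi$: the integral identity then fails to yield a direct sign contradiction, because $\sin\tau$ can be negative on a sizeable subset of $[s^*,t]$. Closing this case requires marrying the geometric machinery of Corollary \ref{crossect} with the boundary information supplied by $F_1$ and $F_2$; the $\tau\leq\pi$ regime reduces to a clean sign argument, but the full statement calls for careful bookkeeping of the lifted turning map and the resulting crossings of $\mathcal N$.
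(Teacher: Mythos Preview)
There is a real gap, and it sits precisely where you yourself flag the difficulty. In your coordinates, with $\tau(t)=0$ and $\tau\ge 0$, a \emph{transversal} crossing of $\mathcal I$ at $s^*$ forces $y'(s^*)=\sin\tau(s^*)<0$ (since $y$ passes from $0$ down into the region $y<0$), hence $\tau(s^*)\in(\pi,2\pi)\cup(3\pi,4\pi)\cup\cdots$, so automatically $\tau(s^*)>\pi$. Thus every genuine crossing already lands in your Case~2; Case~1 only captures the degenerate tangential situation you single out at the start. The clean integral argument therefore settles almost nothing, and the content of the lemma is entirely in Case~2.

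In Case~2 the argument does not close. First, Corollary~\ref{crossect} is stated for endpoints on the $x$-axis lying on a common unit circle centred on the negative $y$-axis; after recentring at the midpoint of $\gamma(s^*)$ and $\gamma(t)$ this requires $d(\gamma(s^*),\gamma(t))\le 2$, which you have not established (and the lemma carries no assumption that $\gamma\subset\Omega$, so no diameter bound is available). Second, and more seriously, the existence of parallel tangents follows already without any reflection or appeal to Corollary~\ref{crossect}: since $\tau(t)=0$ and $\tau>\pi$ somewhere on $[s^*,t]$, the intermediate value theorem gives $u$ with $\tau(u)=\pi$, whence $\gamma'(u)=-\gamma'(t)$. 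But you then assert that parallel tangents, ``combined with the confinement $x\le 0$ on $[F_1,t]$, force an additional crossing of $\mathcal N$ in the open interval $(F_1,t)$''. This implication is never argued and is not evident: parallel tangents constrain the direction field $\gamma'$, not the position $x(u)$, and there is no mechanism offered that converts a value $\tau=\pi$ into a zero of $x$ strictly between $F_1$ and $t$.

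By contrast, the paper's proof works with the local extrema of $y$ produced by the hypothetical crossing. At a local maximum of $y$ the curve is convex and at a local minimum it is concave; the mountain--pass argument then locates a further inflection $t_i$ at which $\tau(t_i)<\tau(t)$, contradicting the minimality of $\tau$ at $t$. If you want to salvage your integral approach, the missing step is an honest analysis of the values $\tau(u)$ at the interior critical points of $y$ on $(s^*,t)$: showing that at such a point either $\tau=0$ with $\tau'\neq 0$ (forcing $\tau<0$ nearby), or reducing to the paper's convex/concave alternation, is what actually delivers the contradiction.
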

\begin{proof} Suppose that a bounded curvature path $\gamma$ with maximal inflection point $\gamma(t)$ crosses the inflection tangent $\mathcal I$ in between $F_1$ and $\gamma(t)$ (see Fig. \ref{figmaxinf}). In the generic case the path $\gamma$ leaves and then reenters the quadrant {\sc III} obtaining two adjacent intersections say at $\gamma(t_1)$ and $\gamma(t_2)$. Since $\mathcal I$ corresponds to the $x$-axis, we have that the graph of $\gamma$ in between $t_1$ and $t_2$ and in between $t_2$ and $t$ admits minimum and maximum $y$-values respectively. In addition it is easy to see that the graph of $\gamma$ at the minimum is concave and that at the maximum is convex. By virtue of the {\it mountain pass theorem} there exists an inflection point, say $\gamma(t_i)$, other than $\gamma(t)$ in between $F_1$ and $\gamma(t)$, that is, $\tau(t_i)<\tau(t)$ leading to a contradiction. If $\gamma$ intersects $\mathcal I$ in a single point the same method applies. The second assertion is proved using an analogous argument.
\end{proof}

\begin{figure}[h]
	\centering
	\includegraphics[width=1\textwidth,angle=0]{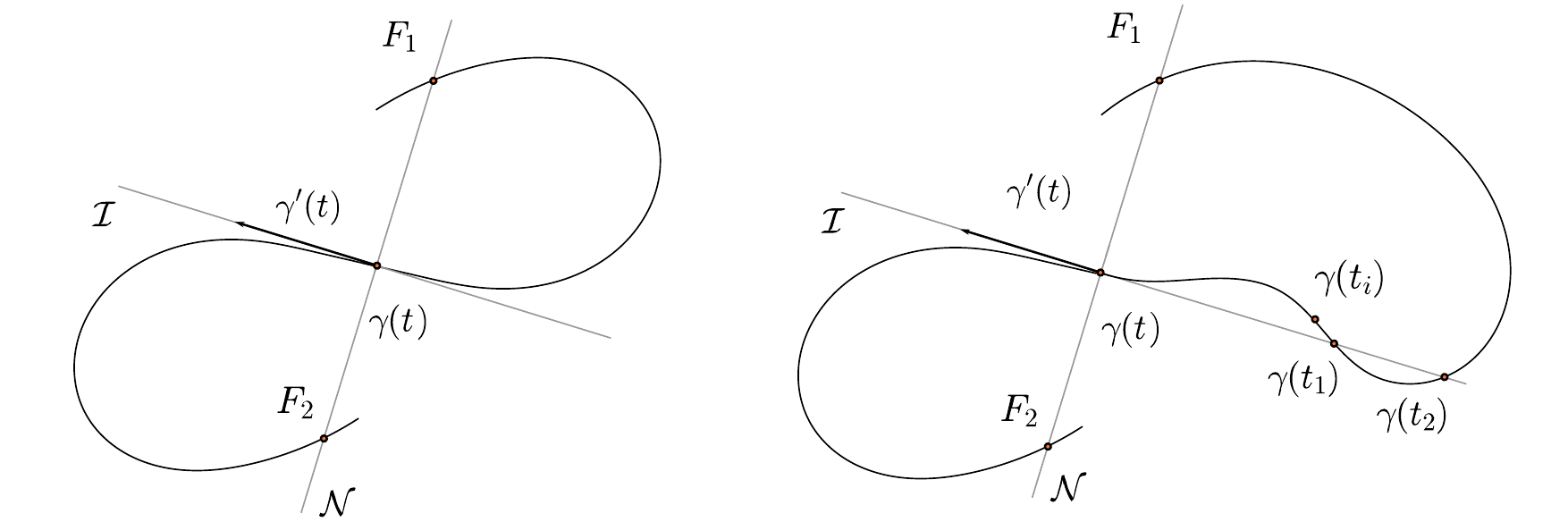}
\caption{The point $\gamma(t)$ at the left is a maximal inflection point but the point $\gamma(t)$ at right is not. }
\label{figmaxinf}
\end{figure}

Recall that $\Delta(\Omega)$ is the space of embedded bounded curvature paths for $\mbox{\sc x,y}\in T\mathbb R^2$ satisfying condition D.

Next we show that paths admitting a returning point have diameter bigger than the diameter of $\Omega\subset \mathbb R^2$ concluding that these are paths not in $\Omega$.

\begin{lemma} \label{sthm} [S-Lemma]. Paths in $\Delta(\Omega)$ do not have returning points.
\end{lemma}

\begin{proof} Suppose there exists an embedded bounded curvature path $\gamma:I\rightarrow {\mathbb R}^2$ in $\Omega\subset \mathbb R^2$ with a returning point at $\partial {\mathcal R}_1$. Since $\theta_1$ and $\theta_2$ have length less than $\pi$; by considering $x=p$ and the returning point to be $q$ in Corollary \ref{crossect} (see Fig. \ref{figband}) we conclude that there exists a pair of parallel tangents and by applying Proposition \ref{partan} we can homotope $\gamma$ until it touches $\partial {\mathcal R}_3$, so the homotoped path has two returning points after applying Proposition \ref{partan}. Let $\gamma(t)$ be a maximal inflection point with coordinate system as in Remark \ref{inflcoord}. By virtue of Lemma \ref{mipit} we have that the path $\gamma$ between $F_1$ and $\gamma(t)$ does not crosses the inflection tangent $\mathcal I$ in a point other than $\gamma(t)$ and symmetrically $\gamma$ also does not crosses $\mathcal I$ in between $\gamma(t)$ and $F_2$. In other words since a maximal inflection point is an extremal of the turning map $\tau$ we have that the two returning points must lie in opposite quadrants.

Our strategy is to establish that none of the trajectories from {\sc x} to {\sc y} are possible for embedded paths in $\Omega$ under the hypothesis of a returning point. The first scenario occurs when $\gamma$ crosses the inflection normal ${\mathcal N}$. In other words we want to analyze the behavior of $\gamma$ as its trajectory travels from the quadrant $ \mbox{\sc IV}$ to $\mbox{\sc III} $ and from the quadrant $ \mbox{\sc I}$ to $\mbox{\sc II}$, see Fig. \ref{figsthm}. We separate this into cases:
\begin{enumerate}
\item $d(F_1,\gamma(t))\geq2 $ and $d(F_2,\gamma(t))\geq 2$.
\item $d(F_1,\gamma(t))<2$ and $d(F_2,\gamma(t))<2$.
\item $d(F_1,\gamma(t))<2$ and $d(F_2,\gamma(t))\geq 2$.
\item $d(F_1,\gamma(t))\geq 2$ and $d(F_2,\gamma(t))< 2$.
\end{enumerate}

The proof of the first case is trivial since it immediately implies that diam$(\gamma)>4$ and therefore by Theorem \ref{diaml4} $\gamma$ is not in $\Omega$.

For the second case we apply Corollary \ref{crossect} to $\gamma$ where the circle $C$ is the right adjacent circle of $\gamma(t)$ and the parallel lines are $L_1={\mathcal I}$ and $L_2$ satisfies the equation $y=2$. We conclude that $\gamma$ must contain a point of the line $L_2$, otherwise it crosses $\mathcal I$ contradicting the hypothesis that $\gamma(t)$ is a maximal inflection point, using Lemma \ref{mipit}.

By an analogous argument applying Corollary \ref{crossect} to $\gamma$ where the circle $C$ is the left adjacent circle of $\gamma(t)$ and parallel lines $L_1={\mathcal I}$ and $L_2$ satisfies equation $y=-2$, we conclude that $\gamma$ must contain a point of the line $L_2$, otherwise it crosses $\mathcal I$ contradicting the hypothesis that $\gamma(t)$ is a maximal inflection point, using Lemma \ref{mipit}. So, we obtain that diam$(\gamma)>4$ therefore $\gamma$ is a path not in $\Omega$. In addition note that the third and fourth cases are identical and also lead to diam$(\gamma)>4$ by a combination of the first and second case.

\begin{figure}[h]
	\centering
	\includegraphics[width=.8\textwidth,angle=0]{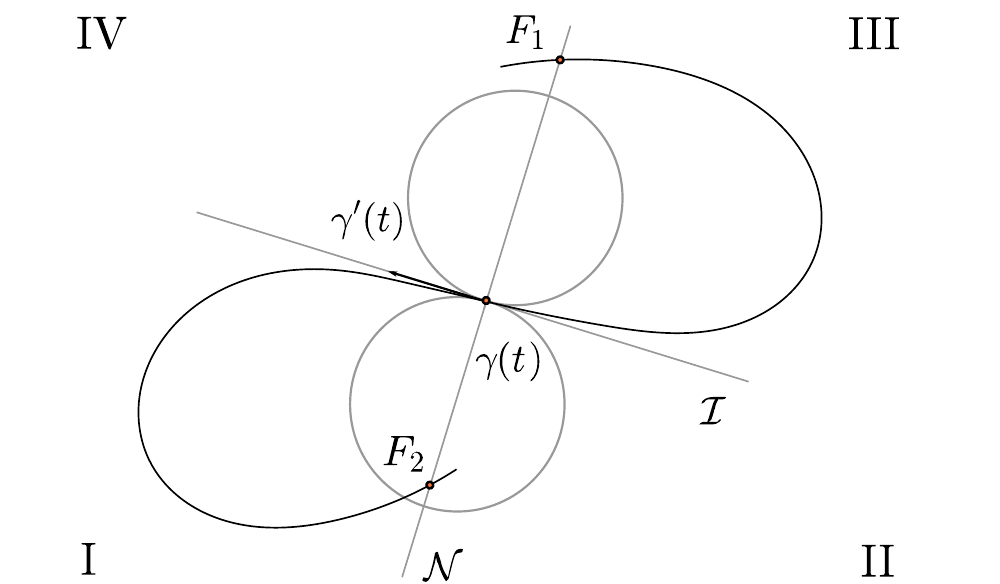}
\caption{A bounded curvature path with diameter bigger than 4.}
 \label{figsthm}
\end{figure}

In conclusion we have excluded the following trajectories for embedded bounded curvature paths in $\Omega$. Here $\rightarrow$ indicates the direction of travel of $\gamma$ where the first and the last element in the sequence contain $x$ and $y$ respectively. 
 
 \begin{itemize}
\item  $\mbox{\sc I} \rightarrow \mbox{\sc IV} \rightarrow \mbox{\sc III} \rightarrow \mbox{\sc I} \rightarrow \mbox{\sc II} $.
\item $\mbox{\sc I} \rightarrow \mbox{\sc IV} \rightarrow \mbox{\sc III} \rightarrow \mbox{\sc I} \rightarrow \mbox{\sc II} \rightarrow \mbox{\sc III}$.
\item $\mbox{\sc IV} \rightarrow \mbox{\sc III} \rightarrow \mbox{\sc I} \rightarrow \mbox{\sc II} $.
\item $ \mbox{\sc IV} \rightarrow \mbox{\sc III} \rightarrow \mbox{\sc I} \rightarrow \mbox{\sc II} \rightarrow \mbox{\sc III}$.
\end{itemize}

Next we establish that no embedded bounded curvature path in $\Omega$, under the hypothesis of a returning point, has initial point $x$ belonging to {\sc III}. Therefore we exclude of the following trajectories:
\begin{itemize}
\item $ \mbox{\sc III} \rightarrow \mbox{\sc I}$.
\item  $ \mbox{\sc III} \rightarrow \mbox{\sc I} \rightarrow \mbox{\sc II}$.
\item  $ \mbox{\sc III} \rightarrow \mbox{\sc I}\rightarrow \mbox{\sc II} \rightarrow \mbox{\sc III}$.
\end{itemize}

Let $\mbox{\sc x,y}\in T{\mathbb R}^2$ such that $x$ lies in {\sc III} and $y$ lies in {\sc I}. Since $\gamma$ admits a returning point, say at $\gamma(t)$ for some $t \in I$, its maximal inflection point is such that $\langle X,\gamma'(t)\rangle < 0$. Now, let $L_1$ be the line satisfying equation $y=2$ and $L_2$ be the line satisfying equation $y=-2$ in the coordinate system considered in Remark \ref{inflcoord}. Consider the following cases:

\begin{itemize}
\item The point $x$ lies in the band between ${\mathcal I}$ and $L_1$ and $y$ lies in the band between ${\mathcal I}$ and $L_2$.

\item The point $x$ lies in the upper half plane with boundary $L_1$ and $y$ lies in the lower half plane with boundary $L_2$.

\item The point $x$ lies in the band between ${\mathcal I}$ and $L_1$ and $y$ lies in the lower half plane with boundary $L_2$.

\item The point $x$ lies in the upper half plane with boundary $L_1$ and $y$ lies in the band between ${\mathcal I}$ and $L_2$.
\end{itemize}

If $x$ belongs to the band in between ${\mathcal I}$ and $L_1$  by virtue of Corollary \ref{crossect} $\gamma$ contains a point in the upper half plane bounded by the line $L_1$. The same argument applies if $y$ belongs to the band between ${\mathcal I}$ and $L_2 $. Therefore we have that diam$(\gamma)>4$ implying that $\gamma$ is a path not in $\Omega$.

The case when $x$ lies in the upper half plane bounded by $L_1$ and $y$ lies in the lower half plane bounded by $L_2$ trivially implies that diam$(\gamma)>4$, therefore $\gamma$ is a path not in $\Omega$. The proof of third and fourth statements involve just a combination of the first and second statements.

The validity of the S-Lemma  for the three remaining configurations for the position of the returning point at $\partial {\mathcal R}_1$ or $\partial {\mathcal R}_3$ is proven using an identical argument as above. Note that the different signs of the curvature at $\gamma(t)$ induce different arrangements for the quadrants.
\end{proof}

\begin{remark} When $\theta_2>\frac{\pi}{2}$, (or $\theta_4>\frac{\pi}{2}$) the returning point may lie on $\partial {\mathcal R}_2$ in an extension of an arc of $\partial {\mathcal R}_1$, ($\partial {\mathcal R}_3$). In any case, Theorem \ref{sthm} also applies.
\end{remark}

\begin{theorem}\label{nt} Paths in $\Delta(\Omega)$ do not have boundary tangent points.
\end{theorem}
\begin{proof} By Theorem \ref{firstreg} bounded curvature paths do not have boundary tangent points with $\partial {\mathcal R}_1$ before entering ${\mathcal R}_2$, and in $\partial{\mathcal R}_3$ after leaving $ {\mathcal R}_2$ for the final time. By Lemma \ref{sthm}, embedded bounded curvature paths do not have boundary tangent points at $\partial {\mathcal R}_1$. By a symmetrical argument, embedded bounded curvature paths do not have boundary tangent points in $\partial {\mathcal R}_3$. By Theorem \ref{r2} bounded curvature paths do not have boundary tangent points with $\partial {\mathcal R}_2$. Since $\partial \Omega$ is formed by $\partial {\mathcal R}_1$, $\partial{\mathcal R}_2$ and  $\partial{\mathcal R}_3$, the result follows.
\end{proof}

\begin{theorem}\label{insbound} Paths in $\Delta(\Omega)$ have bounded length.
\end{theorem}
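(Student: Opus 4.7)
The plan is to argue by contradiction, using two principal ingredients already established: the diameter estimate $\mathrm{diam}(\Omega)<4$ (Theorem~\ref{diaml4}) and the S-theorem (Theorem~\ref{sthm}), which rules out returning points for embedded bounded curvature paths in $\Omega$. I would assume that some $\gamma\in\Delta(\Omega)$ has arbitrarily large arc-length $L$ and then try to produce a returning point on $\gamma$, contradicting Theorem~\ref{sthm}.

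The first step is to show that any sufficiently long bounded curvature path confined to a planar region of diameter less than $4$ must admit a pair of parallel tangents in the sense of Proposition~\ref{partan}. Since $\gamma$ is parametrized by arc length and the displacement $\|\gamma(L)-\gamma(0)\|$ is strictly less than $4$, the average tangent $(1/L)\int_0^L\gamma'(t)\,dt$ has magnitude at most $4/L$; combined with the turning map $\tau$ being $1$-Lipschitz (because $\|\gamma''\|\leq 1$), this cancellation forces the image of $\gamma'$ in $\mathbb S^1$ to contain antipodal directions once $L$ is large enough, producing parameters $t_1<t_2$ with $\gamma'(t_1)=-\gamma'(t_2)$.

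Next, I would convert these parallel tangents into a returning point. The chord joining $\gamma(t_1)$ and $\gamma(t_2)$ is a cross-section in the sense of Definition~\ref{defncross}, of length strictly less than $4$. Applying Corollary~\ref{crossect} together with the arc-length bounds $\theta_k<\pi$ (and the sharper constraint $\theta_k<\pi/2$ supplied by Lemma~\ref{pi/2} in the presence of two long adjacent arcs), the subarc between $\gamma(t_1)$ and $\gamma(t_2)$ cannot remain inside the band associated with an adjacent unit circle at $\mbox{\sc x}$ or $\mbox{\sc y}$. Consequently, this subarc must meet $\partial\mathcal R_1$ or $\partial\mathcal R_3$ tangentially after visiting $\mathcal R_2$ (respectively before visiting $\mathcal R_2$), which is exactly the definition of a returning point; Theorem~\ref{sthm} then yields the contradiction.

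The crux of the argument is the first step. It is not automatic that a long bounded curvature path in a region of bounded diameter possesses parallel tangents, since \emph{a priori} the tangent-direction map could oscillate without ever crossing an antipodal pair. The refined argument I would use exploits the bounded curvature constraint to show that once $L$ is large compared with $\mathrm{diam}(\Omega)$ the turning map must achieve total variation exceeding $\pi$ on some subinterval; the embedding hypothesis is then essential to rule out near-closed loops of length close to $2\pi$ that could otherwise accumulate without forcing the boundary tangency that the S-theorem forbids.
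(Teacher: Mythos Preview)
Your route—produce parallel tangents, convert them to a returning point, then invoke Theorem~\ref{sthm}—has a genuine gap at the conversion step.

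A returning point is by definition a \emph{boundary tangent point}: a place where $\gamma$ actually touches $\partial\Omega$ in $\partial\mathcal R_1$ after having visited $\mathcal R_2$ (or the symmetric statement for $\partial\mathcal R_3$). The mere existence of parallel tangents on $\gamma$ says nothing about $\gamma$ meeting $\partial\Omega$; an embedded bounded curvature path with parallel tangents can sit entirely in the interior of $\Omega$, so there is no returning point for Theorem~\ref{sthm} to forbid. Your appeal to Corollary~\ref{crossect} runs the implication in the wrong direction—that result \emph{manufactures} parallel tangents and a cross section from a band-escape hypothesis, not the other way around—and the constraints $\theta_k<\pi$ (or $\theta_k<\pi/2$ from Lemma~\ref{pi/2}) govern the adjacent arcs of $\partial\Omega$, not subarcs of $\gamma$, so they cannot force $\gamma$ into tangential contact with $\partial\mathcal R_1$ or $\partial\mathcal R_3$.

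The paper's argument does not go through the S-theorem. It splits on whether $\gamma$ admits a negative direction (some $t$ with $\langle X,\gamma'(t)\rangle<0$). If not, the first coordinate of $\gamma$ is monotone (Lemma~\ref{mono}), and boundedness of $\Omega$ rules out arbitrarily large length. If so, the turning map has a maximal inflection point, and the paper carries out a trajectory analysis—using Lemma~\ref{mipit} and the band argument of Corollary~\ref{crossect}, in the same spirit as the \emph{proof} of Theorem~\ref{sthm}—to show directly that $\mathrm{diam}(\gamma)>4$, contradicting Theorem~\ref{diaml4}. The contradiction comes from the diameter estimate, not from the absence of returning points. Your first step (long paths in $\Omega$ have parallel tangents) is in the same spirit as the paper's first case—if the range of $\tau$ spans less than $\pi$ there is a fixed direction of strictly positive progress—but even granting it, you still have to handle the case where parallel tangents \emph{are} present, and that is exactly where your argument breaks down.
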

\begin{proof}Let $\gamma \in \Delta(\Omega)$ be a path of arbitrarily large length. Suppose $\gamma$ does not have a negative direction. Since $\Omega\subset{\mathbb R}^2$ is bounded and $\gamma$ is a path of arbitrary large length that only travels forward, we have that $\gamma$ must leave $\Omega$ leading to a contradiction. Suppose $\gamma$ have a negative direction, since the turning map $\tau:I \to {\mathbb R}$ is a continuous function defined on a compact domain, $\tau$ admits a maximal inflection point. As a consequence of Lemma \ref{mipit} the trajectory of $\gamma$ after the maximal inflection point (see Fig. \ref{boundlength}):

\begin{itemize}
\item Lies in between the lines $\mathcal I$ and $L_2$ traveling forward.
\item Turns back and lies in between the lines $L_1$ and $L_2$.
\item Crosses $L_2$.
\item Turns back and crosses $L_1$ twice.
\end{itemize}

\begin{figure}[h]
	\centering
	\includegraphics[width=.7\textwidth,angle=0]{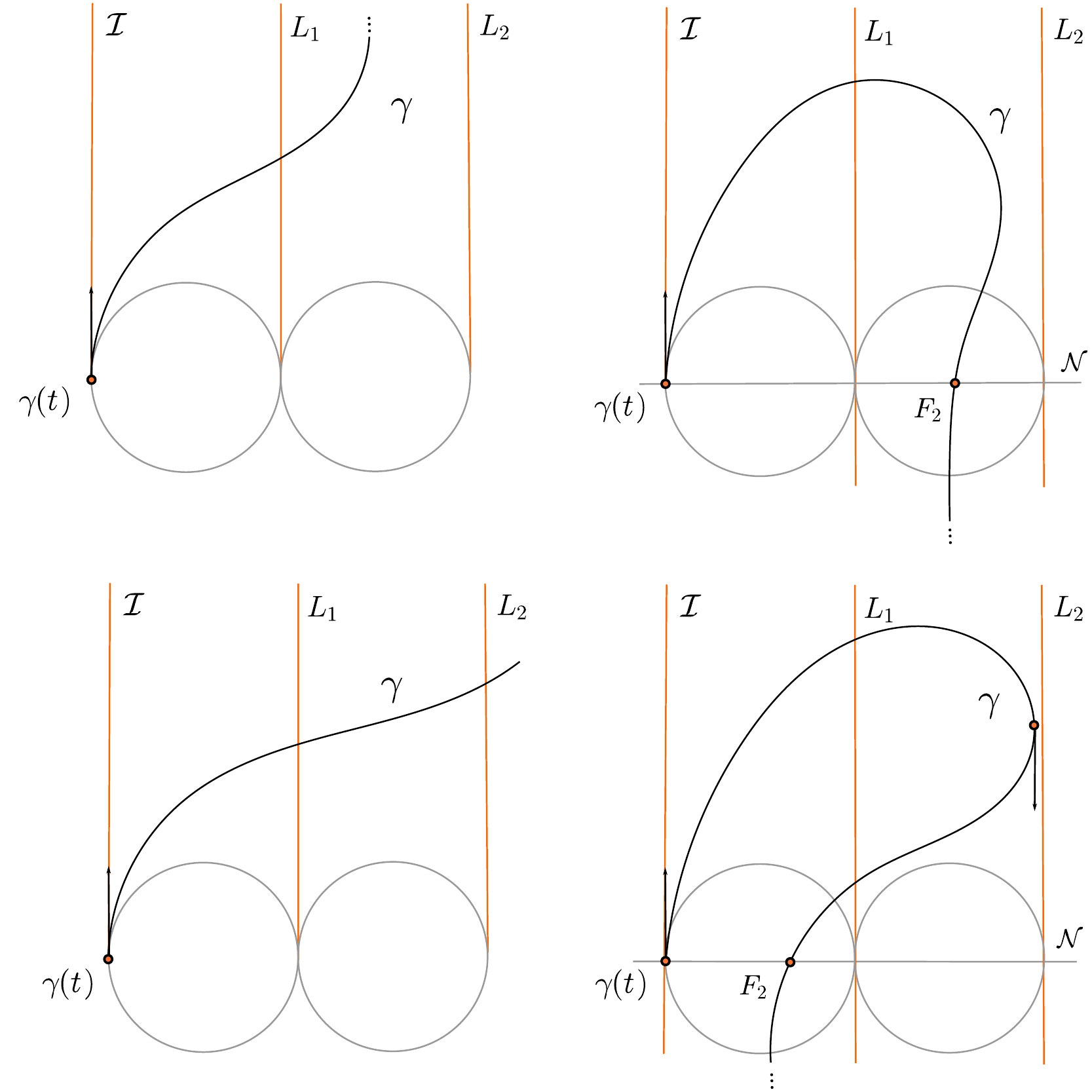}
\caption{Possible trajectories for $\gamma$ after the maximal inflection point $\gamma(t)$. By Lemma \ref{mipit} (after the maximal inflection point) the path $\gamma$ may cross the inflection tangent $\mathcal I$ before intersecting $\mathcal N$ (see Fig. \ref{figmaxinf}).}
 \label{boundlength}
\end{figure}

\noindent Here, the inflection tangent $\mathcal I$ and $L_1$, $L_2$ are parallel lines tangent to unit radius circles. Since the length of $\gamma$ is chosen to be arbitrarily large, it is easy to see that under the possible trajectories of $\gamma$ after the maximal inflection point a diameter bigger than 4 is always achieved. The first two cases run with an analogous argument as applied to paths that only travel forward (see Fig. \ref{boundlength} top). The third case immediately implies $\mbox{\rm diam}(\gamma)>4$, the last case implies the existence of parallel tangents, by Theorem \ref{partan} $\gamma$ is a free path and by Theorem \ref{nt} these are paths not in $\Omega$ (see Fig. \ref{boundlength} bottom). Since the possible unbounded length paths always have diameter bigger than 4, by Theorem \ref{diaml4} the result follows.
\end{proof}


\begin{corollary}\label{cannotsing} Paths in $\Delta(\Omega)$ are not bounded-homotopic to paths with a self intersection.
\end{corollary}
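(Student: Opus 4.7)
The plan is to argue by contradiction, using transitivity of the bounded-homotopy relation together with the two key facts already established: self-intersecting bounded curvature paths are free (Theorem \ref{singular}) and paths in $\Delta(\Omega)$ are not free (Corollary \ref{mainresultp1}, via Theorem \ref{insbound}).

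First, I would assume for contradiction that there exist $\gamma \in \Delta(\Omega)$ and a bounded curvature path $\eta \in \Gamma(\mbox{\sc x,y})$ with a self intersection such that $\gamma$ and $\eta$ are bounded-homotopic; call the witnessing homotopy $\mathcal{H}_t$. Next I would invoke Theorem \ref{singular} on $\eta$: since $\eta$ has a self intersection, $\eta$ is a free path, so there is a second bounded curvature homotopy $\mathcal{K}_t$ deforming $\eta$ through $\Gamma(\mbox{\sc x,y})$ into paths of arbitrarily large length.

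The key step is then concatenation of homotopies: since bounded-homotopy is an equivalence relation on $\Gamma(\mbox{\sc x,y})$, composing $\mathcal{H}_t$ with $\mathcal{K}_t$ yields a bounded curvature homotopy from $\gamma$ to paths of arbitrarily large length. Thus $\gamma$ itself is a free path, directly contradicting Corollary \ref{mainresultp1}, which asserts that elements of $\Delta(\Omega)$ are not free. Equivalently, one may appeal to Theorem \ref{insbound} to note that every path in $\Delta(\Omega)$ has bounded length, while the composed homotopy produces bounded-homotopic representatives of $\gamma$ of unbounded length.

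I do not anticipate a serious obstacle here; the statement is essentially a combination of two already proven facts. The only thing to keep in mind is that one must apply both Theorem \ref{singular} (to move from the self-intersecting path to arbitrarily long paths through $\Gamma(\mbox{\sc x,y})$) and Corollary \ref{mainresultp1} or Theorem \ref{insbound} (to forbid $\gamma \in \Delta(\Omega)$ from being free), and that transitivity of bounded-homotopy is what glues the two pieces together.
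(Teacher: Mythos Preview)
Your proposal is correct and follows essentially the same approach as the paper: apply Theorem~\ref{singular} to the self-intersecting path to see it is free, then use transitivity of bounded-homotopy to conclude $\gamma$ is free, contradicting Corollary~\ref{mainresultp1}. The paper inserts an extra (and unnecessary) step of passing to the \emph{first} self-intersecting path along the homotopy before invoking Theorem~\ref{singular}, but the logical content is the same, and your version is in fact slightly cleaner.
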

\begin{proof}  Consider $\gamma\in \Delta(\Omega)$ and a homotopy of bounded curvature paths $\mathcal H_t$ such that ${\mathcal H}_t(0)=\gamma$, and ${\mathcal H}_t(1)$ is a path in with a self intersection. Then there exists $r\in I$ such that ${\mathcal H}_t(r)=\sigma$ is the first path admitting a self intersection in ${\mathcal H}_t$. By Theorem \ref{singular} we have that $\sigma$ is free and therefore bounded-homotopic to a path of arbitrary large length contradicting Theorem \ref{insbound}.\end{proof}


\section{Non-uniqueness of the homotopy class of bounded curvature paths}\label{secnotriv}

Next we present our main result.

\begin{theorem}\label{mainresultp1} If $\mbox{\sc x,y}\in T{\mathbb R}^2$ satisfies condition D. Then, $\Gamma(\mbox{\sc x,y)}$ has least two homotopy clasess being these ${\Delta}(\Omega)$ and ${\Delta}'(\Omega)$. In particular the elements in ${\Delta}(\Omega)$ are not free.  
\end{theorem}

\begin{proof} If $\Gamma(\mbox{\sc x,y)}$ contains an isolated point the result immediately follows. Consider $\gamma \in {\Delta}(\Omega)$. Suppose there exists $\delta \in {\Delta}'(\Omega)$ together with a bounded curvature homotopy $\mathcal H_t: [0,1] \rightarrow \Gamma(\mbox{\sc x,y)}$ such that ${\mathcal H}_t(0)=\gamma$ and ${\mathcal H}_t(1)=\delta$. Then there exists $p \in I$ such that ${\mathcal H}_t(p)$ has a boundary tangent point contradicting via Theorem \ref{nt} the continuity of $\mathcal H_t$. The elements in ${\Delta}(\Omega)$ are not free by Theorem \ref{insbound}. 
\end{proof}

We conclude that (embedded) paths in $\Delta(\Omega)$ are not bounded-homotopic to paths in $\Delta'(\Omega)$. In particular, the spaces $\Gamma(\mbox{\sc x,y})$ with $\mbox{\sc x,y}\in T{\mathbb R}^2$ satisfying condition {\sc D} are not path connected implying the existence of at least two different homotopy classes in $\Gamma(\mbox{\sc x,y})$. We conjecture that $\Delta(\Omega)$ is an isotopy class in $\Gamma(\mbox{\sc x,y})$. 

\section{An Application to Motion Planning }\label{ext}

In the decline design of an underground mine, the mine is considered as a 3-dimensional network (strategic locations in the mine are represented as directed nodes with links establishing connections between such locations) see Fig. \ref{figmine}. Additional restrictions on the structure of the 3-dimensional network are given by navigability restriction on turning radius for vehicles, and the inclination (or gradient) for the ramps.

Sussmann in 1995 answered the problem of finding minimal length bounded curvature paths in $\mathbb R^3$ \cite{sus3}. The gradient constraint is not taken into consideration being the important result in \cite{sus3} inviable for applications to motion planning of robots in 3-space due to mechanical limitations. 

The approach of minimizing the cost of the links corresponds to considering the projected problem in the horizontal plane. A planar path (length minimiser) can be lifted into the 3-space while keeping a uniform gradient. The lifted path will satisfy the gradient constraint if and only if  the length of the planar path reaches a lower bound dependent on the vertical displacement between the end points of the link. If the length of the minimum length path is less than the given lower bound we can attempt to extend (homotope) the path to reach the required length. If the projected nodes $\mbox{\sc x,y}\in T{\mathbb R}^2$ are such that a region $\Omega$ is obtained then by virtue of Theorem \ref{nt} we have that such paths are trapped in $\Omega$. Therefore, the desired lower bound for the length of the paths between the projected nodes may not be achieved. 

\begin{figure}[h]
	\centering
	\includegraphics[width=.7\textwidth,angle=0]{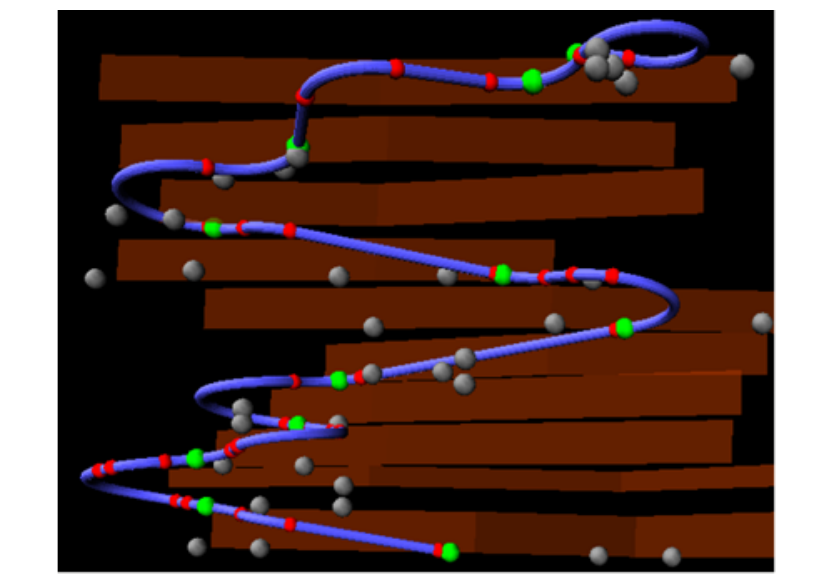}
\caption{An underground mine design obtained by DOT.}
\label{figmine}
\end{figure}

As an immediate consequence of Theorem \ref{mainresultp1} and in the context of the problem previously described we have the following result.

\begin{corollary}\label{noarblength}Suppose that $\mbox{\sc x,y}\in T{\mathbb R}^2$ satisfies condition {\sc D}. Then the minimal length element in $\Gamma(\mbox{\sc x,y})$ is not bounded-homotopic to a free path.
 \end{corollary}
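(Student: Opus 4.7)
The plan is to combine the two main structural results already established: the partition of $\Gamma(\mbox{\sc x,y})$ into $\Delta(\Omega)$ and $\Delta'(\Omega)$ provided by Corollary \ref{mainresultp1}, and the length bound for $\Delta(\Omega)$ provided by Theorem \ref{insbound}. Specifically, I would argue that the minimum length element $\gamma_{\min}$ of $\Gamma(\mbox{\sc x,y})$ lies in $\Delta(\Omega)$; then any path bounded-homotopic to $\gamma_{\min}$ must also lie in $\Delta(\Omega)$, and hence has length bounded by a constant depending only on the endpoint condition.

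First, I would recall that by Dubins' characterization, $\gamma_{\min}$ is a concatenation of arcs of the adjacent circles $\mbox{\sc C}_l(\mbox{\sc x})$, $\mbox{\sc C}_r(\mbox{\sc x})$, $\mbox{\sc C}_l(\mbox{\sc y})$, $\mbox{\sc C}_r(\mbox{\sc y})$, with at most one straight segment. Since proximity condition D holds, these four unit circles together with the two intermediate arcs form $\partial\Omega$, and in particular each arc of $\partial\Omega$ is itself a subarc of one of the adjacent circles (with the middle arcs of $\partial\mathcal R_2$ being the CCC connectors). I would show that there is a Dubins path realized by moving along adjacent arcs on the inner side of $\Omega$ (for CSC) or along two adjacent arcs plus a CCC intermediate arc bounding $\mathcal R_2$ from inside — in either case traced entirely inside $\Omega$. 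The point $\gamma_{\min}$ is globally minimizing and, since any Dubins path traversing a full turn of any adjacent circle incurs an additional length of at least $\pi$ (and in the outer configuration, also crosses the diameter bound of $\Omega$), the minimizer must be this inner Dubins path. Hence $\gamma_{\min}$ is embedded and contained in $\Omega$, i.e.\ $\gamma_{\min}\in\Delta(\Omega)$.

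Having placed $\gamma_{\min}$ in $\Delta(\Omega)$, the conclusion is immediate. Suppose for contradiction that there were a bounded curvature homotopy $\mathcal H_t:[0,1]\to\Gamma(\mbox{\sc x,y})$ with $\mathcal H_t(0)=\gamma_{\min}$ and $\mathcal H_t(1)=\eta$ where $\ell(\eta)$ can be chosen arbitrarily large. By Corollary \ref{mainresultp1}, the homotopy class of $\gamma_{\min}$ is exactly $\Delta(\Omega)$, so every $\mathcal H_t(p)$ lies in $\Delta(\Omega)$. But Theorem \ref{insbound} asserts that paths in $\Delta(\Omega)$ have uniformly bounded length, contradicting the unboundedness of $\ell(\eta)$.

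The main obstacle is the geometric verification that the globally minimizing Dubins path is in fact the inner one embedded in $\Omega$, rather than some exotic CSC or CCC realization visiting an adjacent circle from outside. I would handle this by appealing to the Dubins classification (at most six words \textsc{lsl}, \textsc{rsr}, \textsc{lsr}, \textsc{rsl}, \textsc{lrl}, \textsc{rlr}) and eliminating, under condition D, each candidate that leaves $\Omega$: an outer \textsc{csc} trajectory contains an adjacent arc of length at least $\pi$ and is therefore beaten in length by the inner companion, while any outer \textsc{ccc} candidate forces a middle arc of length $\geq \pi$, contradicting the arc condition of Remark \ref{centercircle}. Once that verification is in place, the remaining chain of implications is purely formal.
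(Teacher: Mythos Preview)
Your approach is essentially the paper's: the paper records this corollary as ``an immediate consequence of Corollary~\ref{mainresultp1}'' with no separate proof, i.e.\ the minimal element sits in $\Delta(\Omega)$ and elements of $\Delta(\Omega)$ are not free. You spell out the same chain and, usefully, supply what the paper leaves implicit---a reason via the Dubins classification that $\gamma_{\min}\in\Delta(\Omega)$---after which Corollary~\ref{mainresultp1} (together with Theorem~\ref{insbound}) finishes the argument exactly as you wrote.

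One small correction in your elimination of candidates: invoking ``the arc condition of Remark~\ref{centercircle}'' to rule out an outer \textsc{ccc} is misdirected, since that remark constrains the specific middle arcs of $\partial\mathcal R_2$, not arbitrary \textsc{ccc} Dubins paths. The honest elimination is by length: a Dubins-optimal \textsc{ccc} always has middle arc of length greater than $\pi$, so it is (i) longer than the short inner \textsc{csc} available under condition {\sc D}, and (ii) in any case a free path by Corollary~\ref{partanarc}. Hence $\gamma_{\min}$ is the inner \textsc{csc}, embedded in $\Omega$, and your final paragraph goes through. You (like the paper) also tacitly restrict to the first case of Definition~\ref{defd}, where $\Omega$ actually exists; the two degenerate cases there need a word as well, but the paper does not provide one either.
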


As the result of the research conducted at The University of Melbourne research group in underground mine optimization, the software DOT (Decline Optimisation Tool) has been developed \cite{dot}. The algorithm in DOT assumes the validity of Corollary \ref{noarblength}. The algorithm searches for the second shortest Dubins path between the given endpoint condition. In this fashion the algorithm in DOT becomes optimal. Fig. \ref{figmine} illustrates an underground mine design obtained by DOT.


\bibliographystyle{amsplain}

\end{document}